\theoremstyle{plain}
\newtheorem{thm}{Theorem}[section]
\newtheorem{lem}[thm]{Lemma}
\newtheorem{cor}[thm]{Corollary}
\newtheorem{prop}[thm]{Proposition}
\theoremstyle{definition}
\newtheorem{rmk}[thm]{Remark}
\numberwithin{equation}{section}
\newcommand{\ga}[2]{\begin{gather}\label{#1}#2 \end{gather}}
\newcommand{\sO}{{\mathcal O}}
\newcommand{\sS}{{\mathcal S}}
\newcommand{\A}{{\mathbb A}}
\newcommand{\C}{{\mathbb C}}
\newcommand{\F}{{\mathbb F}}
\renewcommand{\P}{{\mathbb P}}
\newcommand{\Q}{{\mathbb Q}}
\newcommand{\Z}{{\mathbb Z}}
\title [Lefschetz theorems]{Survey on some aspects of Lefschetz theorems in algebraic geometry}
\author{H\'el\`ene Esnault } 
\address{Freie Universit\"at Berlin, Arnimallee 3, 14195, Berlin,  Germany}
\email{esnault@math.fu-berlin.de}
\thanks{Supported by  the Einstein program}
\date{ November 27, 2016}
\begin{document}
\begin{abstract}
We survey classical material around Lefschetz theorems for fundamental groups, and show the relation to parts of Deligne's program in Weil II. 
\end{abstract}
\maketitle

\section{Classical notions}
\noindent
 Henri  Poincar\'e  (1854-1912) in \cite{Poi95} formalised the notion of {\it fundamental group} of a connected topological space $X$. It had appeared earlier on, notably in the work of Bernhard Riemann (1826-1866) (\cite{Rie51}, \cite{Rie57}) in the shape of multi-valued functions.\\[.1cm]
Fixing  a base point $x\in X$, then $\pi^{\rm top}_1(X,x)$ is first the set of homotopy classes of loops centered at $x$. It has a {\it group structure} by composing loops centered at $x$. It is a {\it topological invariant}, i.e. depends only on the homeomorphism type of $X$.  It is {\it functorial}: if $f: Y\to X$ is a continuous map, and $y\in Y$, then $f$ induces a homomorphism  $f_*: \pi_1^{\rm top}(Y,y)\to \pi_1^{\rm top}(X, f(y))$  of  groups.
 \\[.1cm]
If $X$ is locally contractible, for example if $X$ is a connected complex analytic manifold,
 its fundamental group  determines its topological coverings as follows: fixing $x$, there is a {\it universal covering $X_x$, together with a covering map $\pi: X_x\to X$, and a lift $\tilde{x}$ of $x$ on $X_x$,  such that $\pi_1^{\rm top}(X,x)$ is identified with ${\rm Aut}(X_x/X)$.}  More precisely, $\pi_1^{-1}(x)$ is a set, which is in bijection with $\pi_1^{\rm top}(X, x)$, sending the neutral element of the group to $\tilde{x}$.  In fact the universal covering exists under weaker local assumptions, which we do not discuss, as we only consider analytic and algebraic varieties in this note. 
 \\[.1cm]
Let us assume $X$ is a smooth projective algebraic  curve over $\C$, that is $X(\C)$ is a Riemann surface. By abuse of notations,  we write $ \pi_1^{\rm top}(X,x)$ instead of $ \pi_1^{\rm top}(X(\C),x)$.
 Then 
$\pi_1^{\rm top}(X,x)=0$ for $\P^1$, the Riemann sphere, that is if the genus $g$ of $X$ is $0$,  it is equal to $\Z^2$ if $g=1$ and else for $g\ge 2$, it is spanned by $2g$ generators $\alpha_i, \beta_i, i=1,\ldots, g$ with one relation $\prod_{i=1}^g [\alpha_i, \beta_i]=1$. So it is nearly a free group. In fact, for any choice of $s$ points $a_1,\ldots, a_s$ of $X(\C)$ different from $x$,  $s\ge 1$,  $\pi_1^{\rm top}(X\setminus \{a_1,\ldots, a_s\}, x)$ is free, is  spanned by  $\alpha_i, \beta_i, \gamma_1,\ldots, \gamma_s$, 
with one relation $\prod_{i=1}^g [\alpha_i, \beta_i]\prod_{j=1}^s \gamma_j=1$ (\cite{Hat02}).
 For $s=1$, the map
 $\pi_1^{\rm top}(X\setminus a, x) \to \pi_1^{\rm top}(X, x)$ is surjective and yields the presentation.
  \\[.1cm]
More generally, for any non-trivial Zariski open subvariety $U\hookrightarrow X$ containing $x$, 
the homomorphism $\pi_1^{\rm top}(U, x) \to \pi_1^{\rm top}(X, x)$ is always surjective, as we see taking loops and moving them via homotopies inside of $U$. The kernel in general is more complicated, but is spanned by loops around the divisor at infinity. \\[.1cm]
If $X$ has dimension $\ge 2$, then $\pi_1^{\rm top}(X,x)$  is far from being  free. A natural question is how to compute it.  This is the content of the Lefschetz  (Salomon Lefschetz (1884-1972)) theorems for the fundamental group. 
\begin{thm}[Lefschetz theorems] \label{thm:top_lef}
Let $X$ be a smooth connected projective variety defined over $\C$. Let $Y\to X$ be a smooth hyperplane section. Let $x\in Y\subset X$. Then the homomorphism of groups $\pi_1^{\rm top}(Y,x)\to \pi_1^{\rm top}(X,x)$ 
is 
\begin{itemize}
\item[1)] surjective if $Y$ has dimension $1$;
\item[2)] an isomorphism if $Y$ has dimension $\ge 2$.

\end{itemize}
\end{thm}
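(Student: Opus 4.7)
The plan is to follow the classical Morse-theoretic route going back to Andreotti and Frankel. Set $n=\dim_\C X$, so that $\dim_\C Y = n-1$. The starting observation is that the complement $U:=X\setminus Y$ is a smooth affine variety of complex dimension $n$: after fixing a projective embedding $X\hookrightarrow \P^N$ in which $Y$ is cut out by a hyperplane $H$, the set $U$ is closed in the affine space $\P^N\setminus H\cong \A^N$. The strategy is to exploit this ``affine thinness'' of $U$ to show that the pair $(X,Y)$ is highly connected, then read off the statement from the homotopy long exact sequence of the pair.

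First, I would invoke the Andreotti--Frankel theorem: a smooth affine variety of complex dimension $n$ has the homotopy type of a CW complex of real dimension $\leq n$. The proof picks a generic proper strictly plurisubharmonic exhaustion $\phi\colon U\to \R$ (for instance a small Morse perturbation of $|z|^2$ under a closed embedding $U\hookrightarrow \C^N$). The key local computation is that at any non-degenerate critical point the real Hessian of a strictly plurisubharmonic function splits into $n$ two-dimensional blocks with eigenvalues of opposite sign, so the Morse index is automatically $\leq n$, and standard Morse theory then produces the CW model.

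Second, I would convert this bound into a handle decomposition of $X$ relative to $Y$. Pick a closed tubular neighborhood $N$ of $Y$ with smooth boundary; then $X\setminus\mathrm{int}(N)$ is a compact manifold with boundary contained in $U$, and running the gradient flow of $-\phi$ from a high sublevel set exhibits $X$ as built from $N$ (hence up to homotopy from $Y$) by successively attaching handles of real index $\geq 2n-n=n$. Since attaching a $k$-cell to a CW complex leaves $\pi_j$ unchanged for $j\leq k-2$ and can only quotient $\pi_{k-1}$, one deduces that $Y\hookrightarrow X$ induces an isomorphism on $\pi_j$ for $j\leq n-2$ and a surjection for $j=n-1$. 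Specialising to $j=1$ yields exactly the two claims: surjectivity when $n\geq 2$, i.e.\ $\dim Y\geq 1$, and isomorphism when $n\geq 3$, i.e.\ $\dim Y\geq 2$.

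The heart of the argument is the Andreotti--Frankel Morse index bound, which is the only genuinely complex-analytic input: it rests on the special sign pattern of the Hessian of a plurisubharmonic function, a feature invisible from a purely real-variable viewpoint. The other delicate step is the ``turning around'' that converts an upper bound on cell dimensions in $U$ into a lower bound on handle indices in $X$ relative to $Y$; here one must handle the non-compactness of $U$ and carefully match boundary conditions along $\partial N$ so that the Morse flow really assembles $X$ from a thickening of $Y$.
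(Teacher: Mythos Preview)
Your argument is correct and is the Andreotti--Frankel route to the Lefschetz hyperplane theorem. The paper sketches instead Bott's proof: rather than embedding $U=X\setminus Y$ affinely into $\C^N$ and doing Morse theory with a perturbation of $|z|^2$, Bott works directly on $X$ with a function built from a section $s$ cutting out $Y$ and a hermitian metric $h$ on $\sO_X(Y)$ of positive curvature; the Levi form of this function at each critical point is essentially the Chern curvature of $(\sO_X(Y),h)$, and positivity forces every critical point (all lying in $U$) to have Morse index $\geq n=\dim_\C X$, so $X$ is obtained from $Y$ by attaching cells of dimension $\geq n$. Both arguments land on the same cell-attachment conclusion and both rest on the same local fact---a strictly plurisubharmonic function has Morse index $\leq n$, hence its negative has index $\geq n$---so they are close cousins. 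The difference is packaging: Bott's construction is intrinsic to the pair $(X,\sO_X(Y))$ and delivers the lower bound on handle indices directly, sparing you both the auxiliary affine embedding and the ``turning around'' manoeuvre you flag at the end; your approach, on the other hand, isolates the affineness of $X\setminus Y$ as the clean geometric input and lets you invoke Andreotti--Frankel as a black box.
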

In particular 
\begin{cor} \label{cor:f_presented}
$\pi_1^{\rm top}(X,x)$ is a  {\it finitely presented} group.
\end{cor}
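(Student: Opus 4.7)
The plan is to induct on $n = \dim X$ and use Theorem~\ref{thm:top_lef} to propagate finite presentation down from smooth hyperplane sections of decreasing dimension, until one lands either on a smooth curve (where a finite presentation is explicit) or on a smooth projective surface (which needs a supplementary argument).

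\textbf{Base case.} If $n = 1$, then $X$ is a smooth projective curve of some genus $g$, and the presentation $\langle \alpha_1,\beta_1,\ldots,\alpha_g,\beta_g \mid \prod_{i=1}^{g}[\alpha_i,\beta_i]=1\rangle$ already recalled in Section~1 exhibits $\pi_1^{\rm top}(X,x)$ as finitely presented.

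\textbf{Inductive step for $n \geq 3$.} Fix a projective embedding $X \hookrightarrow \P^N$. By Bertini's theorem, the general hyperplane is transverse to $X$, and we may arrange (by a linear change of coordinates) that such a smooth section $Y$ passes through $x$. Part~2 of Theorem~\ref{thm:top_lef} gives an isomorphism $\pi_1^{\rm top}(Y,x) \xrightarrow{\sim} \pi_1^{\rm top}(X,x)$, and since $\dim Y = n-1 \geq 2$, the induction hypothesis applied to $Y$ closes the case.

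\textbf{Surface case $n = 2$.} Here only Part~1 of Theorem~\ref{thm:top_lef} is available, yielding a surjection $\pi_1^{\rm top}(C,x) \twoheadrightarrow \pi_1^{\rm top}(X,x)$ from a smooth hyperplane section $C \subset X$. This alone gives finite generation but not finite presentation, so the kernel must be controlled. The cleanest way is a Lefschetz pencil: after blowing up the base locus of a general pencil of hyperplane sections one obtains a morphism $\tilde X \to \P^1$ with finitely many singular fibers (each an irreducible nodal curve) and smooth general fiber isotopic to $C$; the kernel of $\pi_1^{\rm top}(C,x) \to \pi_1^{\rm top}(\tilde X,x) = \pi_1^{\rm top}(X,x)$ is then normally generated by the finite collection of vanishing cycles attached to the critical values, giving a finite presentation. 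The main obstacle is precisely this surface step, since upgrading ``finitely generated'' to ``finitely presented'' requires a Picard--Lefschetz analysis rather than a bare application of Theorem~\ref{thm:top_lef}. (Alternatively, one could sidestep the pencil argument by invoking that the compact real manifold $X(\C)$ is triangulable, hence homotopy-equivalent to a finite CW-complex whose fundamental group is automatically finitely presented; but that bypasses Lefschetz.)
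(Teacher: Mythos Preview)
Your argument is correct, and you have actually done more than the paper: the corollary is stated without proof, introduced only by the words ``In particular.'' You rightly observe that Part~1 of Theorem~\ref{thm:top_lef} by itself gives only finite generation at the surface step, so an extra ingredient is genuinely needed there.

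The most natural reading of the paper's implicit argument is through Bott's result, quoted a few lines below the corollary: $X(\C)$ is obtained from $Y(\C)$ by attaching finitely many cells of dimension $\ge \dim X$. For a surface this means attaching finitely many $2$-cells (each imposing one relation on $\pi_1$) and higher cells (which leave $\pi_1$ unchanged); combined with the explicit curve presentation this yields finite presentation immediately. Your Lefschetz-pencil route---kernel normally generated by the vanishing cycles---is the classical Zariski approach and reaches the same conclusion while making the added relations geometrically explicit; your parenthetical triangulation remark is a third valid path. All three are sound: Bott's cell-attachment is the shortest given what the paper already records, while your pencil argument is more self-contained and closer in spirit to Lefschetz's original method.
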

\noindent
In fact, both the theorem and its  corollary remain true for $\pi_1^{\rm top}(U,x)$, where $U$ is any non-trivial Zariski open subvariety in $X$. One takes $U\hookrightarrow X$ to be a good compactification,  that is $X$ is smooth projective such that $X\setminus U$ is a strict normal crossing divisor (strict meaning that all components are smooth). Then $Y$ in the theorem is replaced by the intersection $V=Y\cap U$, with the additional assumption that $Y$ is in good position with respect to $X\setminus U$.
\\[.1cm]
 In his proof  in  \cite{Lef24}, Lefschetz introduces the notion of {\it Lefschetz pencil}: one moves $Y$ in a one parameter family $Y_t, t\in \P^1$. For a good family, all fibres but finitely many of them are smooth.  His proof was not complete. In the \'etale context, it was proven only in \cite{SGA2}. Theorem~\ref{thm:top_lef} was proven in \cite{Bot59} using vector bundles: 
 let  $Y$ be a section of the bundle $\sO_X(Y)$. Bott uses the hermitian metric $h$ on $\sO_X(Y)$  to define the function $\varphi=1/(2\pi i)  \bar \partial \partial {\rm log} h(s)$.  Then he  proves that $X(\C)$ is obtained from $Y(\C)$ by attaching finitely many cells of dimension $\ge {\rm dim} (X)$ by doing Morse theory with $\varphi$. 

 \section{Galois theory}
 \noindent
 Let $K$ be a field, $\iota: K\hookrightarrow \bar K$ be a fixed separable closure.  One defines the group  ${\rm Aut}(\bar K/K)$ of automorphisms of $\bar K$ over $K$, endowed with its natural profinite topology.  This is `the' Galois group of $K$ associated to $\iota$. The main  theorem of Galois theory says that there is an  equivalence of categories  $\{$closed subgroups of  ${\rm Aut}(\bar K/K) \} \to \{$extensions $K\subset L \subset \bar K\}$ via $L=\bar K^H$ (\cite{Mil96}, \cite{Sza09}).  \\[.2cm]
 For $X$ a smooth connected variety defined over $\C$,   Grothendieck's key idea was to reinterpret $\pi_1^{\rm top}(X,x)$ as follows. One defines the category of {\it topological covers} ${\rm TopCov}(X)$. 
 The objects are maps $\pi: Y\to X$ where $Y$ is an Hausdorff topological space, locally homeomorphic to $X$ via $\pi$, or equivalently, $Y$ is an analytical space, locally biholomorphic to $X$ via $\pi$ (\cite[Thm.~4.6]{For81}). The maps are over $X$.
 The point $x$ yields a {\it fiber functor} $\omega_x: {\rm TopCov}(X)\to {\rm Sets}, \ (\pi: Y\to X(\C) ) \mapsto  \pi^{-1}(x)$. This means that   $\omega_x$  is {\it faithful}, that is  ${\rm Hom}(A,B)\to  {\rm Hom}(\omega(A), \omega(B))$ is injective. \\[.1cm]
 A unified presentation of Poincar\'e and Galois theories is as follows.
 \begin{thm}
 \begin{itemize}
 \item[1)]  ${\rm Aut}(\omega_x)=\pi_1^{\rm top}(X,x)$;
 \item[2)]  $\omega_x$ yields an equivalence of categories $${\rm TopCov}(X) \xrightarrow{\omega_x} {\rm Rep}_{\rm Sets}( \pi_1^{\rm top}(X,x)).$$
 \item[3)] The universal cover $X_x$ corresponds to the representation of $\pi_1^{\rm top}(X,x)$ by translation on itself. 
 \item[4)] A change of $x$ yields equivalent fibre functors $\omega_x$,  isomorphic $\pi_1^{\rm top}(X,x)$ and isomorphic $X_x$ over $X$. The equivalence and isomorphisms are not canonical.
 \end{itemize}
 \end{thm}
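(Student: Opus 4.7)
The plan is to construct an explicit quasi-inverse to $\omega_x$ from the universal cover, and then read off parts (1), (3), (4) from this construction. Write $G=\pi_1^{\rm top}(X,x)$, and let $\pi: X_x\to X$ be the universal cover with fixed lift $\tilde{x}$ of $x$. The bijection $\pi^{-1}(x)\leftrightarrow G$ sending $\tilde{x}\to 1$ recalled in the introduction is $G$-equivariant for the action of $G={\rm Aut}(X_x/X)$ by deck transformations on the left, so $\omega_x(X_x)$ is the regular $G$-set, which already gives (3).

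For an arbitrary connected cover $\pi_Y: Y\to X$ with $y\in\pi_Y^{-1}(x)$, the standard path-lifting argument (valid because $X$ is locally contractible, so the homotopy lifting property holds) defines a right $G$-action on the fiber $\pi_Y^{-1}(x)$: given $[\gamma]\in G$ and $y'\in\pi_Y^{-1}(x)$, lift $\gamma$ to the unique path in $Y$ starting at $y'$ and take its endpoint. Well-definedness on homotopy classes is the homotopy lifting property; functoriality of this construction in $Y$ upgrades $\omega_x$ to a functor to $G$-sets. For not necessarily connected $Y$ one takes the disjoint union of components. Faithfulness (i.e.\ that $\omega_x$ is indeed a fibre functor) follows because a map of covers is determined by its values on any one fiber of each connected component.

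Next I would build the quasi-inverse $F$ sending a $G$-set $S$ to the balanced product
\[
F(S)\;:=\;X_x\times^G S\;=\;(X_x\times S)/G,
\]
where $G$ acts diagonally (by deck transformations on $X_x$ and through the given action on $S$), with the projection to $X$ induced by $\pi$. Local triviality of $\pi$ transfers to local triviality of $F(S)\to X$, so $F(S)\in{\rm TopCov}(X)$, and the fiber over $x$ is canonically $S$, giving $\omega_x\circ F\cong \id$. Conversely, any connected cover $Y\to X$ with chosen $y\in\pi_Y^{-1}(x)$ is dominated by $X_x$, hence isomorphic to $X_x/H$ with $H={\rm Stab}_G(y)$, and thus identifies with $F(G/H)=F(\omega_x(Y))$; this yields $F\circ\omega_x\cong \id$, proving (2).

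For (1), a natural automorphism $\eta$ of $\omega_x$ is determined by its component $\eta_{X_x}:G\to G$, which must commute with every endomorphism of $X_x$ in ${\rm TopCov}(X)$; since ${\rm End}_{X}(X_x)=G$ acting by left translation, such $\eta_{X_x}$ are exactly right translations, giving ${\rm Aut}(\omega_x)\cong G$. Finally, for (4), any continuous path $\sigma$ from $x$ to $x'$ yields a natural bijection $\omega_x\xrightarrow{\sim}\omega_{x'}$ by path-transport on fibers, hence an isomorphism $\pi_1^{\rm top}(X,x)\xrightarrow{\sim}\pi_1^{\rm top}(X,x')$ of the form $[\gamma]\mapsto[\sigma^{-1}\gamma\sigma]$, depending on $\sigma$ only through its homotopy class rel endpoints; in the absence of a preferred such class, the equivalence is genuinely non-canonical. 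The main technical obstacle is checking that $F(S)=X_x\times^G S$, equipped with the quotient topology, is actually locally homeomorphic to $X$ through the first projection; once one trivializes $\pi$ over a small evenly covered neighborhood $U\subset X$ as $\pi^{-1}(U)\cong U\times G$ and passes to the $G$-quotient, this reduces to the elementary identification $(U\times G\times S)/G\cong U\times S$, and all remaining assertions are formal.
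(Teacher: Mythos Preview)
Your proposal is correct and complete. The paper itself does not give a detailed proof of this classical theorem; it offers only the one-line remark that ``1) uses the universal cover and the identification $\pi_1^{\rm top}(X,x)={\rm Aut}(X_x/X)$'', which is precisely the mechanism you invoke (an automorphism of $\omega_x$ is determined by its component on $X_x$, and compatibility with ${\rm End}_X(X_x)=G$ forces it to be a right translation). Your balanced-product construction of the quasi-inverse and the path-transport argument for (4) are the standard ones and go well beyond what the survey records.
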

 \noindent
 In this language, 1) uses the  universal cover and the identification $\pi_1^{\rm top}(X,x)={\rm Aut}(X_x/X)$. \\[.1cm]
 One can interpret Galois theory in the same way. The embedding  $\iota: K\to \bar K$ corresponds  {\it both} to $x\to X$ and to $X_x \to X$. One defines ${\rm FinExt}(K)$ to be the category of {\it finite} separable $K$-algebra extensions  $K\subset L$.  Then $\iota$ defines a fibre functor $\omega_{\iota}: {\rm FinExt}(K) \to {\rm FinSets},  (K\hookrightarrow L ) \mapsto L\otimes_{\iota} \bar K$, the latter understood as a finite set indexing the split  $\bar K$-algebra $L\otimes_{\iota} \bar K$, and ${\rm FinSets}$ being now the category of finite sets.
 One defines $\pi_1(K, \iota)={\rm Aut}(\omega_\iota)$. It is  a {\it profinite} group.
 \begin{thm}[Galois theory revisited]
 \begin{itemize}
 \item[1)] $\omega_\iota$ yields an equivalence of categories $${\rm FinExt}(K) 
 \xrightarrow{\omega_\iota} {\rm Rep}_{\rm FinSets}( \pi_1(K , \iota)).$$
\item[2)] This equivalence extends to the category of Ind-extensions ${\rm Ext}(K)$ yielding  the functor $\omega_{\iota}: {\rm Ext}(K)\to {\rm Sets}$.  The functor 
$\omega_\iota$ yields an equivalence of categories $${\rm Ext}(K) \xrightarrow{\omega_\iota} {\rm ContRep}_{\rm Sets}( \pi_1(K ,\iota)).$$
\item[3)] $\iota: K\hookrightarrow \bar K$ corresponds to the continuous representation of 
$\pi_1(K,\iota)$ by translation on itself. 
\item[4)] A change of separable closure $\iota$ yields equivalent fibre functors $\omega_{\iota}$, isomorphic $\pi_1(K, \iota)$, simply called {\it  the Galois group } $G_K$ of $K$. The equivalence and the isomorphism are not canonical. 
 \end{itemize}
 \end{thm}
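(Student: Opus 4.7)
The plan is to bootstrap from the classical main theorem of Galois theory (stated earlier in the excerpt as an equivalence between closed subgroups of ${\rm Aut}(\bar K/K)$ and intermediate separable extensions), together with the structure theorem saying every finite separable $K$-algebra is a finite product of finite separable field extensions. The first and crucial step is to identify $\pi_1(K,\iota) := {\rm Aut}(\omega_\iota)$ with the classical group ${\rm Aut}(\bar K/K)$ as topological groups. Given $\sigma \in {\rm Aut}(\bar K/K)$, set $\sigma_L : L \otimes_\iota \bar K \to L \otimes_\iota \bar K$, $\ell \otimes a \mapsto \ell \otimes \sigma(a)$, or equivalently permute $\Hom_K(L, \bar K)$ by post-composition with $\sigma$; this is natural in $L$ and defines an element of ${\rm Aut}(\omega_\iota)$. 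Conversely, an automorphism of $\omega_\iota$ is a compatible system of permutations of the finite sets $\Hom_K(L, \bar K)$ as $L$ runs over finite separable subextensions of $\bar K$; tracking where $\iota|_L$ goes for each $L$ assembles, by $\bar K = \varinjlim L$, into a single $K$-automorphism of $\bar K$. The profinite topology on ${\rm Aut}(\omega_\iota)$, generated by pointwise stabilizers on finite fiber sets, coincides with the Krull topology under this identification.

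For part 1), decompose an object of ${\rm FinExt}(K)$ as a finite product $\prod L_i$ of finite separable field extensions; since $\omega_\iota$ commutes with products, it suffices to treat a single $L/K$ of degree $n$. Then $L \otimes_K \bar K \simeq \bar K^n$ with indexing set $\omega_\iota(L) = \Hom_K(L,\bar K)$, on which $G_K := \pi_1(K,\iota)$ acts transitively; the stabilizer of any embedding $f$ is ${\rm Aut}(\bar K/f(L))$, a closed subgroup of finite index. Full faithfulness amounts to the classical fact that $K$-algebra maps $L \to L'$ correspond bijectively to $G_K$-equivariant maps on fibers, and essential surjectivity follows from the main Galois theorem applied to closed subgroups of finite index. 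Part 2) is obtained by passing to filtered colimits: any $M \in {\rm Ext}(K)$ equals $\varinjlim L_\alpha$ over its finite separable subalgebras, so $\omega_\iota(M) = \varinjlim \omega_\iota(L_\alpha)$ inherits a continuous $G_K$-action (each stabilizer contains some open normal subgroup $\Gal(\bar K/L_\alpha')$ for a Galois $L_\alpha' \supset L_\alpha$), and conversely every continuous $G_K$-set is a filtered colimit of finite ones, reducing to part 1).

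For part 3), $\omega_\iota(\bar K) = \bar K \otimes_\iota \bar K$ splits as a product of copies of $\bar K$ indexed by $\Hom_K(\bar K, \bar K) = {\rm Aut}(\bar K/K) = G_K$; by the formula for the action in the first paragraph, $G_K$ permutes this indexing set by left translation, which is the claimed representation. For part 4), a second separable closure $\iota' : K \hookrightarrow \bar K'$ admits, by Zorn's lemma applied to partial $K$-embeddings, an isomorphism $\bar K \xrightarrow{\sim} \bar K'$ of $K$-algebras; such an isomorphism is well-defined only modulo ${\rm Aut}(\bar K'/K)$, hence not canonical, but it induces a natural isomorphism $\omega_\iota \simeq \omega_{\iota'}$ and consequently an isomorphism $\pi_1(K,\iota) \simeq \pi_1(K,\iota')$, justifying the unadorned notation $G_K$.

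The main obstacle is the surjectivity half of the first step: checking that a mere set-theoretic compatible family of permutations of $\Hom_K(L,\bar K)$ for all finite separable $L$ really glues to an automorphism of $\bar K$, and that the profinite topology intrinsic to ${\rm Aut}(\omega_\iota)$ matches the Krull topology. This is the content of the profinite description $G_K = \varprojlim_{L/K \text{ finite Galois}} \Gal(L/K)$, which must be invoked to ensure no pathological non-continuous automorphisms of $\omega_\iota$ escape the target of our map. Once this identification is in hand, parts 1)--4) are essentially formal consequences of the classical Galois correspondence and a standard colimit argument.
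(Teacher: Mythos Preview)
The paper does not prove this theorem. This is a survey, and ``Galois theory revisited'' is stated without proof as a reformulation of the classical main theorem of Galois theory (already recalled a few lines earlier with references to \cite{Mil96} and \cite{Sza09}) in the language of fibre functors, in order to set up the analogy with Grothendieck's \'etale fundamental group in the next section. So there is no argument in the paper to compare yours against.

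Your sketch is the standard derivation and is correct in outline: identify ${\rm Aut}(\omega_\iota)$ with ${\rm Aut}(\bar K/K)$ via the profinite description $G_K=\varprojlim_{L/K}\Gal(L/K)$; reduce part 1) to single finite separable field extensions using the product decomposition of finite separable algebras and then invoke the classical Galois correspondence; obtain part 2) by passing to filtered colimits; and check parts 3) and 4) by direct inspection. One small point to tighten in 3): $\bar K$ is not a finite extension, so $\omega_\iota(\bar K)$ must be computed in the ind-extension of $\omega_\iota$ as $\varinjlim_L \omega_\iota(L)$ rather than literally as the split spectrum of $\bar K\otimes_K\bar K$; the resulting $G_K$-set is indeed $G_K$ with the translation action, but the phrasing ``$\bar K\otimes_\iota\bar K$ splits as a product of copies of $\bar K$'' is not accurate as written.
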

 \noindent
 To understand $\pi_1(K, \iota)$ for number fields  is the central  topic of one branch of number theory. For example,  the inverse Galois problem is the question whether or not $G_{\Q}$ can be as large as thinkable, that is whether or not any finite group is a quotient of $G_{\Q}$.  In these notes, we shall take for granted the knowledge of these groups. The main focus shall be on {\it finite} fields $k$.  For these, Galois theory due to Galois (!) shows that $G_{\F_q}=\varprojlim_n \Z/n=:\widehat{ \Z}$, where $\widehat{\Z}$ is topologically generated by the arithmetic Frobenius $\bar k\to \bar k, \lambda \mapsto \lambda^q$. 
 \section{\'Etale fundamental group \cite{SGA1}} \label{s:pi}
 \noindent
 This is the notion which unifies the topological fundamental group and Galois theory. 
 Let $X$ be a connected normal  (geometrically unibranch is enough) locally noetherian scheme. In \cite{SGA3}, it is suggested that one can  enlarge the category $\rm{\acute{E}t}(X)$ of pro-finite \'etale covers to discrete covers. It is important when one drops the normality assumption on $X$, and still requests  to have $\ell$-adic sheaves as representations of a (the right one)  fundamental group. A general theory of pro\'etale fundamental groups has been defined by Scholze \cite{Sch13},  and Bhatt-Scholze \cite{BS15}, but we won't discuss this, as we focus on Lefschetz theorems, and for those we need the \'etale fundamental group as defined in \cite{SGA1}.\\[.1cm]
   The category of finite \'etale covers ${\rm Fin\acute{E}t}(X)$   is the category of $\pi: Y\to X$ which are of  finite presentation, finite flat and unramified, or equivalently  of finite presentation,  finite smooth and unramified.  \\[.1cm]
The other basic data consist of a geometric point $x\in X$,   in fact a point in a separably closed field is enough.  Indeed, if $x$ is a point with separably closed residue field, then up to isomorphism there is only one geometric point  $\tilde x$ above it, which is algebraic, and the  fibre functors  
 $\omega_{\tilde x}: {\rm Fin \acute{E}t}(X) \to {\rm FinSets},( \pi: Y\to X) \mapsto \pi^{-1}(\tilde x)$
 associated to those geometric points are the same (not only isomorphic). \\[.1cm]
  So the construction explained now depends only on the point in a separable closure. But a geometric point enables one to take non-algebraic points, this gives more freedom as we shall see. \  
  The functor  $\omega_x: {\rm Fin \acute{E}t}(X) \to {\rm FinSets},( \pi: Y\to X) \mapsto \pi^{-1}(x)$, the latter understood as a finite set indexing the split algebra $\pi^{-1}(x)$ over $x$, is a fibre functor. 
One defines the {\it \'etale fundamental group of $X$ based at $x$} as $$\pi_1(X, x)={\rm Aut}(\omega_x).$$ It is  a {\it profinite} group, thus  in particular has a topology.  \begin{thm}[Grothendieck \cite{SGA1}]
 \begin{itemize}
 \item[1)] $\omega_x$ yields an equivalence of categories $${\rm Fin\acute{E}t}(X) \xrightarrow{\omega_x} {\rm Rep}_{\rm FinSets}( \pi_1(X ,x)).$$
\item[2)] This equivalence extends to the category of pro-finite \'etale covers  ${\rm \acute{E}t}(X)$ yielding $\omega_{x}: {\rm \acute{E}t}(X)\to {\rm Sets}$. 
$\omega_x$ yields an equivalence of categories $${\rm\acute{ E}t}(X) \xrightarrow{\omega_x} {\rm ContRep}_{\rm Sets}( \pi_1(X , x)).$$
\item[3)]  The continuous representation of $\pi_1(X,x)$ acting by translation on itself corresponds to $X_x\to X$, called the universal cover centered at $x$. 
\item[4)] A change of  $x$   yields equivalent $\omega_x$, isomorphic $\pi_1(X, x)$ and isomorphic $X_x$ over $X$.  The equivalence and the isomorphisms are not canonical.
 \end{itemize}
 \end{thm}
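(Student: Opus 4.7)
The plan is to apply Grothendieck's abstract axiomatic framework from \cite[Exp.~V]{SGA1}: any \emph{Galois category} $(\mathcal{C}, \omega)$ yields an equivalence $\mathcal{C}\xrightarrow{\omega}{\rm Rep}_{\rm FinSets}({\rm Aut}(\omega))$ with ${\rm Aut}(\omega)$ automatically profinite. Thus the heart of the proof is to verify that $({\rm Fin\acute{E}t}(X), \omega_x)$ satisfies the Galois category axioms; parts 2)--4) then follow by passing to pro-objects and by a compactness argument.

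For 1), I would check: (a) ${\rm Fin\acute{E}t}(X)$ has a terminal object $X\xrightarrow{\id}X$ and finite fibre products; (b) finite coproducts and quotients $Y/G$ by finite group actions exist inside ${\rm Fin\acute{E}t}(X)$; (c) every morphism factors as a strict epimorphism followed by the inclusion of a connected component into a disjoint union; (d) $\omega_x$ is exact, commutes with all relevant limits and colimits, and is conservative. All of these reduce to standard properties of finite \'etale morphisms: stability under composition, base change and disjoint union; decomposition of a finite \'etale cover of a connected normal base as a disjoint union of connected covers; effectivity of finite group quotients on affine schemes. Conservativity of $\omega_x$ comes from the fact that on each connected component, the degree of $\pi\colon Y\to X$ equals $\#\pi^{-1}(x)$.

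Once 1) holds, 2) is formal: the category of pro-objects in ${\rm Fin\acute{E}t}(X)$ matches the category of continuous $\pi_1(X,x)$-sets (not necessarily finite) via the natural extension of $\omega_x$. For 3), the universal cover $X_x$ is the pro-object corresponding to $\pi_1(X,x)$ acting on itself by left translation; its defining property that any pointed finite \'etale cover $(Y,\tilde{y})\to(X,x)$ receives a unique pointed map from $(X_x,\tilde{x})$ matches the fact that a pointed continuous $\pi_1(X,x)$-set receives a unique equivariant map from $\pi_1(X,x)$ sending $1$ to the chosen point.

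The main obstacle is 4). Given two geometric points $x,x'$, one must produce a (non-canonical) isomorphism of fibre functors $\omega_x\simeq\omega_{x'}$, after which the isomorphisms on ${\rm Aut}(-)$ and on universal covers follow formally. Following \cite[Exp.~V]{SGA1}, for each Galois object $Y\in{\rm Fin\acute{E}t}(X)$ the set ${\rm Isom}(\omega_x|_{\langle Y\rangle},\omega_{x'}|_{\langle Y\rangle})$ is nonempty and finite, since both $\omega_x(Y)$ and $\omega_{x'}(Y)$ are torsors under ${\rm Aut}(Y/X)$. As $Y$ ranges over Galois objects in a cofiltered manner, these form an inverse system of nonempty finite sets, and by a Tychonoff-type compactness argument its limit is nonempty, producing the desired isomorphism $\omega_x\simeq\omega_{x'}$. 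The non-canonicity reflects the fact that the set of such isomorphisms is itself a $\pi_1(X,x)$-torsor.
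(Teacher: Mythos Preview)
Your sketch is correct and is exactly the argument of \cite[Exp.~V]{SGA1}, which is all the paper invokes: the theorem is stated here without proof, simply attributed to Grothendieck. So there is nothing to compare against beyond the reference you already follow; your verification of the Galois-category axioms and the compactness argument for 4) match the cited source.
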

 \section{Comparison}
 \noindent
 We saw now the formal analogy between topological fundamental groups, Galois groups and \'etale fundamental groups. 
 We have to see the geometric relation. 
 \begin{thm}[Riemann existence theorem, \cite{Rie51}, \cite{Rie57}] \label{thm:RET}
 Let $X$ be a smooth variety over $\C$. Then a finite \'etale cover $\pi_{\C}: Y_{\C} \to X(\C)$ is the complex points  $\pi(\C): Y(\C)\to X(\C)$ of a uniquely defined finite \'etale cover $\pi: Y\to X$.
 \end{thm}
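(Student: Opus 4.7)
My plan is to split the statement into uniqueness, existence for projective $X$ via GAGA, and reduction of the general smooth case to the projective one by compactification.

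For uniqueness, suppose $\pi_i\colon Y_i\to X$ ($i=1,2$) are finite \'etale with an analytic isomorphism $\sigma\colon Y_{1,\C}\xrightarrow{\sim} Y_{2,\C}$ over $X(\C)$. Its graph is a connected component of the analytification of the finite \'etale $X$-scheme $Y_1\times_X Y_2$, so the task reduces to algebraizing an idempotent in $\sO_{(Y_1\times_X Y_2)(\C)}$. When $X$ is projective this is immediate from Serre's GAGA; the general case follows by compactifying and using that morphisms of schemes are determined on dense opens.

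For existence with $X$ projective, the sheaf $\pi_{\C,*}\sO_{Y_\C}$ is a coherent analytic $\sO_{X(\C)}$-algebra, locally free because $\pi_\C$ is finite \'etale. Serre's GAGA, an equivalence of tensor categories between coherent algebraic and coherent analytic sheaves on a projective $\C$-variety, algebraizes it to a coherent sheaf of $\sO_X$-algebras $\sA$. The finite $X$-scheme $Y=\mathbf{Spec}_X\sA$ analytifies to $\pi_\C$, and \'etaleness of $\pi$ is read stalkwise from faithful flatness of $\sO_{X,x}\to \sO_{X(\C),x}^{\mathrm{an}}$ together with the \'etaleness of $\pi_\C$.

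For general smooth $X$, apply Hironaka to choose a smooth projective compactification $\bar X$ with strict normal crossing boundary $D=\bar X\setminus X$. The core step is to extend $\pi_\C$ to a finite analytic cover $\bar\pi_\C\colon \bar Y_\C\to \bar X(\C)$. In local polydisc coordinates with $D=\{z_1\cdots z_k=0\}$ and $X(\C)=(\Delta^*)^k\times\Delta^{n-k}$, every finite connected \'etale cover factors through a product of the elementary power maps $z_i\mapsto z_i^{m_i}$, and these extend canonically across $D$. Globally, one takes (component by component) the normalization of $\bar X(\C)$ along the analytic closure of $Y_\C$. The projective case then algebraizes $\bar\pi_\C$ to $\bar Y\to \bar X$, and restriction to $X$ yields $\pi$.

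The principal obstacle is precisely this boundary extension: \emph{a priori} an analytic finite \'etale cover of $X(\C)$ need not admit a finite completion across $D$. The SNC hypothesis together with the description of finite covers of the punctured polydisc reduces the issue, coordinate by coordinate, to the one-dimensional Riemann removable singularity theorem and to the verification that the normalization so produced is finite over $\bar X(\C)$; this is Abhyankar's lemma in analytic form and is precisely the input that lets GAGA finish the proof.
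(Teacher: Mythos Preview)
The paper does not prove this theorem: it is stated as a classical result with citations to Riemann's original works, and the narrative immediately passes to Corollary~\ref{cor:profinite}. There is therefore no ``paper's proof'' to compare your argument against.

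Your outline is the standard modern proof and is essentially correct: GAGA handles the projective case by algebraizing the coherent $\sO_{X(\C)}$-algebra $\pi_{\C,*}\sO_{Y_\C}$, and the general smooth case reduces to this via Hironaka plus an extension across the SNC boundary. Two small remarks. First, in your uniqueness paragraph for non-projective $X$, the clean way to finish is to observe that $\pi_0$ of a finite-type $\C$-scheme agrees with $\pi_0$ of its analytification; then the analytic graph picks out an algebraic connected component of $Y_1\times_X Y_2$ directly, and no compactification is needed. Second, for the boundary extension you correctly identify the crux: the local model over $(\Delta^*)^k\times\Delta^{n-k}$ has abelian fundamental group $\Z^k$, so every finite connected cover is dominated by a Kummer cover $z_i\mapsto z_i^{m_i}$ and hence extends (with branching) over $D$; the global statement that the resulting normalization of $\bar X(\C)$ in $Y_\C$ is \emph{finite} is precisely the Grauert--Remmert extension theorem, which you might cite by name rather than only alluding to Abhyankar. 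With that reference made explicit, your sketch is complete.
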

 \begin{cor}[Grothendieck, \cite{SGA1}] \label{cor:profinite}
 The \'etale fundamental group  $\pi_1(X,x)$ is the profinite completion of the topological fundamental group $\pi_1^{\rm top}(X, x)$, where $x\in X(\C)$.
 \end{cor}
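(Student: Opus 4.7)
The plan is to reduce the corollary to a purely categorical statement by assembling the three equivalences of categories recorded earlier together with the Riemann existence theorem. Grothendieck's theorem identifies $\pi_1(X,x)$ with ${\rm Aut}(\omega_x)$, where $\omega_x:{\rm Fin\acute{E}t}(X)\to{\rm FinSets}$ sends a finite \'etale cover to its geometric fibre at $x$, and yields an equivalence ${\rm Fin\acute{E}t}(X)\simeq {\rm Rep}_{\rm FinSets}(\pi_1(X,x))$. In parallel, restricting the topological equivalence to the full subcategory ${\rm FinTopCov}(X(\C))\subset {\rm TopCov}(X(\C))$ of covers with finite fibres gives an equivalence ${\rm FinTopCov}(X(\C))\simeq {\rm Rep}_{\rm FinSets}(\pi_1^{\rm top}(X,x))$, again through the fibre functor $\omega_x^{\rm top}$ at $x\in X(\C)$.

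The geometric input is Theorem~\ref{thm:RET}: the analytification functor gives an equivalence of categories
\[
{\rm Fin\acute{E}t}(X) \;\xrightarrow{\sim}\; {\rm FinTopCov}(X(\C)),
\]
and by construction it is compatible with taking the fibre over $x$. Composing with the two Galois-type equivalences above produces an equivalence
\[
{\rm Rep}_{\rm FinSets}\bigl(\pi_1^{\rm top}(X,x)\bigr) \;\xrightarrow{\sim}\; {\rm Rep}_{\rm FinSets}\bigl(\pi_1(X,x)\bigr),
\]
which intertwines the two forgetful functors to ${\rm FinSets}$.

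Finally, I invoke the abstract principle that for any discrete group $G$, the category of finite $G$-sets is canonically equivalent to the category of finite continuous $\widehat{G}$-sets, and $\widehat{G}$ is recovered, as a topological group, as the automorphism group of the forgetful functor. Applying this with $G=\pi_1^{\rm top}(X,x)$ and comparing with the identification $\pi_1(X,x)={\rm Aut}(\omega_x)$ on the \'etale side yields a topological isomorphism $\pi_1(X,x)\cong \widehat{\pi_1^{\rm top}(X,x)}$. The canonical map $\pi_1^{\rm top}(X,x)\to \pi_1(X,x)$ is the one produced by letting a loop act on the finite fibre of a cover, and factors continuously through the profinite completion by the universal property.

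The entire geometric difficulty is contained in Theorem~\ref{thm:RET}: one must be able to algebraise an arbitrary finite analytic cover of $X(\C)$, i.e.\ to promote it uniquely to a finite \'etale morphism of $\C$-schemes. Once that is granted, the passage to the profinite completion is a formal Tannakian-style reconstruction, and no further input is required.
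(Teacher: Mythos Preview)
Your argument is correct and is precisely the deduction the paper intends: the corollary is stated without proof immediately after the Riemann existence theorem, and your write-up simply unpacks why it follows---analytification gives an equivalence ${\rm Fin\acute{E}t}(X)\simeq{\rm FinTopCov}(X(\C))$ compatible with fibre functors, and then the profinite completion drops out of the standard identification of $\widehat{G}$ as the automorphism group of the forgetful functor on finite $G$-sets. There is nothing to add; the paper leaves exactly this reasoning implicit.
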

\noindent
 In particular, using localization and the Lefschetz theorems, one concludes
 \begin{cor} \label{cor:ft}
 Let $X$ be a smooth variety over $\C$.  Then $\pi_1(X,x)$ is topologically of finite type, that is there is a finite type subgroup of $\pi_1(X,x)$  which is dense for the profinite topology.
 
 \end{cor}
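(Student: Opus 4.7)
The plan is to reduce, via the Riemann existence theorem in the form of Corollary~\ref{cor:profinite}, to the analogous finite generation statement for the topological fundamental group, and then to invoke the Lefschetz theorems in the form already packaged in Corollary~\ref{cor:f_presented}.

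First, by Corollary~\ref{cor:profinite}, for $x\in X(\C)$ the \'etale fundamental group $\pi_1(X,x)$ is canonically identified with the profinite completion $\widehat{\pi_1^{\rm top}(X,x)}$. The canonical homomorphism $\pi_1^{\rm top}(X,x)\to \widehat{\pi_1^{\rm top}(X,x)}$ has dense image, since the target is by construction the inverse limit of the finite quotients of $\pi_1^{\rm top}(X,x)$. It therefore suffices to show that $\pi_1^{\rm top}(X,x)$ is finitely generated as an abstract group: the image of any finite generating set in $\pi_1(X,x)$ will then be a finitely generated, topologically dense subgroup, which is the conclusion sought.

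Second, I would pass to a good compactification, invoking Hironaka to embed $X$ as $\bar X\setminus D$ with $\bar X$ smooth projective and $D$ a strict normal crossing divisor. This is exactly the setting of the remark following Corollary~\ref{cor:f_presented}, which extends the Lefschetz statement to the non-proper case and asserts that $\pi_1^{\rm top}(X,x)$ is finitely presented, hence in particular finitely generated. Combined with the first paragraph, this gives the corollary.

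The substantive work lies upstream: unpacking Corollary~\ref{cor:f_presented} in the open variant requires the existence of Lefschetz pencils on $\bar X$ whose smooth hyperplane sections $Y$ meet all strata of $D$ transversally, so that $V=Y\cap X$ is again an open inside a good compactification and the inductive step $\pi_1^{\rm top}(V,x)\to \pi_1^{\rm top}(X,x)$ is surjective for $\dim X=2$ and an isomorphism for $\dim X\ge 3$. The induction terminates at the case of a smooth open curve, where $\pi_1^{\rm top}$ admits the explicit finite presentation with generators $\alpha_i,\beta_i,\gamma_j$ recalled in Section~1 and is visibly finitely generated. That transversality/Lefschetz pencil construction is the main obstacle; once it is granted, the present corollary is a purely formal density statement about profinite completions.
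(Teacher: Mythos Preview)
Your proposal is correct and follows exactly the route the paper indicates: the paper gives no detailed proof but simply prefaces the corollary with ``using localization and the Lefschetz theorems,'' which is precisely your reduction via Corollary~\ref{cor:profinite} to finite generation of $\pi_1^{\rm top}$, followed by the open variant of the Lefschetz theorem (the remark after Corollary~\ref{cor:f_presented}) to cut down to a curve. Your expansion is a faithful unpacking of that one-line hint.
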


 \section{Homotopy sequence and base change}

\noindent
 However, it does not shed light on the structure of $\pi_1(X,x)$ in general.\\[.1cm]
 In the sequel we shall always assume $X$  to be connected,  locally of finite type over a field $k$ and to be  geometrically connected over $k$. The last condition is equivalent to  $k$ being equal to its  algebraic closure in $\Gamma(X, \sO_X)$.
 Given the geometric point $x\in X$, defining the algebraic closure $\iota: k\to \bar k\subset k(x)$ of $k$ in the residue field $k(x)$ of $x$, 
 the functors $${\rm Ext}(k) \to {\rm \acute{E}t}(X),  \ (k\subset \ell)  \mapsto (X_\ell\to X)$$ and $${\rm \acute{E}t}(X) \to {\rm \acute{E}t}(X_{k(x)}),  \ (\pi: Y\to X)\mapsto  (\pi_{k(x)}: Y_{k(x)}\to X_{k(x)})$$
  define the {\it  homotopy sequence}  of continuous homomorphisms
 \ga{1}{ 1\to \pi_1(X_{k(x)}, x)\to \pi_1(X, x) \to \pi_1(k, \iota)\to 1.}
 \begin{thm}[Grothendieck's homotopy exact sequence, \cite{SGA1}]  \label{thm:HES}
 The homotopy sequence \eqref{1} is exact.
 \end{thm}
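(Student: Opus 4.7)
The plan is to verify the exactness of~\eqref{1} by translating each of the three assertions into a property of the categories of finite \'etale covers, via the Galois-category formalism of Section~\ref{s:pi}. Write $\sC_1={\rm Fin\acute{E}t}(\Spec k)$, $\sC_2={\rm Fin\acute{E}t}(X)$, $\sC_3={\rm Fin\acute{E}t}(X_{k(x)})$, with pullback functors $F\colon\sC_1\to\sC_2$, $\Spec\ell\mapsto X_\ell$, and $G\colon\sC_2\to\sC_3$, $Y\mapsto Y_{k(x)}$. Exactness of~\eqref{1} is equivalent to the following Grothendieck-style criteria: (i) $F$ is fully faithful, equivalently $X_\ell$ is connected for every finite separable $\ell/k$---this encodes the surjectivity $\pi_1(X,x)\surj\pi_1(k,\iota)$; (ii) $G\circ F$ lands in trivial covers---this encodes the vanishing of the composite map; and (iii) every object of $\sC_3$ is a subobject of some $G(Y)$---giving injectivity of $\pi_1(X_{k(x)},x)\to\pi_1(X,x)$---while every connected $Y\in\sC_2$ with $G(Y)$ trivial lies in the essential image of $F$---giving exactness in the middle.

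Criteria (i) and (ii) are short. For (i), $X_\ell$ is connected because $X$ is geometrically connected over $k$, so that $X_{\bar k}$ and hence every intermediate $X_\ell$ is connected. For (ii), $X_\ell\times_X X_{k(x)}\cong X_{k(x)}\otimes_k\ell$ is a trivial cover since $k(x)$ contains a separable closure of $k$, whence $\ell\otimes_k k(x)\cong k(x)^{[\ell:k]}$.

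The substance lies in (iii). For the injectivity part, given $Z\to X_{k(x)}$ finite \'etale, I would write $k(x)$ as the filtered union of its finite separable subextensions $\ell_\alpha/k$ and invoke the standard spreading-out theorems to descend $Z$ to a finite \'etale $Z_\alpha\to X_{\ell_\alpha}$; composing with the finite \'etale morphism $X_{\ell_\alpha}\to X$ yields an object of $\sC_2$ whose image under $G$ contains $Z$. For the kernel-equals-image part, start from a connected $Y\to X$ with $Y_{k(x)}$ a disjoint union of copies of $X_{k(x)}$; the idea is to consider the finite $k$-algebra $\ell:=H^0(Y,\sO_Y)$, show it is finite \'etale over $k$ of rank $\deg(Y/X)$, and check that the tautological morphism $Y\to X_\ell$ is an isomorphism by verifying this after the faithfully flat base change to $k(x)$, where both sides become the trivial cover $X_{k(x)}^{[\ell:k]}$.

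The main obstacle is this last descent step. The trivialization of $Y_{k(x)}$ only provides a $k(x)$-rational set of sections; one must control the $\Gal(\bar k/k)$-action on it in order to assemble a finite \emph{\'etale} $k$-algebra $\ell$ (as opposed to a merely finite $k$-algebra), and here the hypothesis that $k$ is algebraically closed in $\Gamma(X,\sO_X)$ enters crucially to match dimensions and to exclude extraneous components of $H^0(Y,\sO_Y)$ beyond those detected by the descent datum. Once (i)--(iii) are in place, the equivalence of categories of Section~\ref{s:pi} together with Galois theory converts them directly into the exactness of~\eqref{1}.
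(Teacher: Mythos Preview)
Your reduction of exactness to the Galois-category criteria (i)--(iii) is exactly the translation the paper gives in the paragraph following the theorem; the paper itself does not argue further but simply cites the Stacks Project, so your sketch already goes beyond what the paper offers and the overall strategy matches.

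Two points of caution in your details. First, your spreading-out argument for injectivity writes $k(x)$ as the filtered union of its finite separable subextensions of $k$; this presumes $k(x)=\bar k$. For a general geometric point $k(x)$ may be transcendental over $k$, and then --- as the Lang--Serre example recorded just after Proposition~\ref{prop:basechange} shows --- the map $\pi_1(X_{k(x)},x)\to\pi_1(X,x)$ can genuinely fail to be injective. So either read~\eqref{1} with $\bar k$ in place of $k(x)$, or add a hypothesis forcing $\pi_1(X_{k(x)})\cong\pi_1(X_{\bar k})$ (e.g.\ properness, or characteristic $0$). Second, for non-proper $X$ the ring $H^0(Y,\sO_Y)$ is typically not finite over $k$ (already for $Y=X=\A^1_k$), so you cannot take $\ell=H^0(Y,\sO_Y)$ literally; the correct $\ell$ is the algebraic closure of $k$ inside $\Gamma(Y,\sO_Y)$, equivalently the finite separable extension corresponding to the transitive $\Gal(\bar k/k)$-set $\pi_0(Y_{\bar k})$. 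This is precisely the descent you already flag as the main obstacle, and once phrased that way the verification that $Y\to X_\ell$ is an isomorphism after base change to $\bar k$ goes through as you indicate.
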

 \noindent
Surjectivity on the right means precisely this: 
 ${\rm Ext}(k) \to {\rm \acute{E}t}(X)$ is fully faithful, and any intermediate \'etale cover $X_\ell \to Y\to X$ comes from $k\subset \ell'\subset \ell$, with $(Y\to X)=(X_{\ell'} \to X)$. Injectivity on the left means that 
  any finite \'etale cover of $X_{k(x)}$ comes from some finite \'etale cover of $X$ (not necessarily geometrically connected) by taking a factor, and exactness in the middle means that 
  given $Y\to X$ finite \'etale, such that  $Y_{k(x)} \to X_{k(x)}$  is completely split, 
 then there is a $\ell \in {\rm Ext}(k)$  such that $Y\to X$ is $X_\ell \to X$. See \cite[~49.14,~49.4.3,~49.4.5]{StacksProject} \\[.2cm]
  There is a more general homotopy sequence: one replaces $X\to {\rm Spec}( k)$ by $f: X\to S$ a {\it proper separable} morphism (\cite[Exp.~X, Defn.~1.1]{SGA1}) of locally noetherian schemes. Separable means that $f$ is flat, and  all fibres $X_s$ are separable, i.e. reduced after all field extensions $X_s\otimes_s K$.   Let $s=f(x)$. Then  analogously defined functors   yield the sequence
  \ga{2}{ \pi_1(X_{s}, x)\to \pi_1(X, x) \to \pi_1(S,s )\to 1.}
  \begin{thm}[Grothendieck's second homotopy exact sequence, \cite{SGA1}]  \label{thm:HES2} If $f_*\sO_X=\sO_S$, 
  the homotopy sequence \eqref{2}  is exact.
  \end{thm}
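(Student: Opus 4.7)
Via Grothendieck's Galois-theoretic dictionary, the sequence translates into two assertions about the pull-back functor $f^{*}:{\rm Fin\acute{E}t}(S)\to{\rm Fin\acute{E}t}(X)$: first, that $f^{*}$ is fully faithful and sends connected covers to connected covers (equivalent to surjectivity of $\pi_{1}(X,x)\to\pi_{1}(S,s)$); and second, that a finite étale cover $Y\to X$ lies in the essential image of $f^{*}$ precisely when its restriction $Y_{s}:=Y\times_{X}X_{s}\to X_{s}$ is completely split (equivalent to exactness in the middle). Triviality of the composite $\pi_{1}(X_{s})\to\pi_{1}(X)\to\pi_{1}(S)$ is immediate, since $X_{s}\to X\to S$ factors through the geometric point $s$, whose fundamental group is trivial.

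The first assertion is a formal consequence of flat base change. For $T\to S$ finite étale (hence affine and flat), the identity $(f_{T})_{*}\sO_{X\times_{S}T}=(f_{*}\sO_{X})\otimes_{\sO_{S}}\sO_{T}=\sO_{T}$, combined with $f_{*}\sO_{X}=\sO_{S}$, yields $\Gamma(X\times_{S}T,\sO)=\Gamma(T,\sO_{T})$. This shows that $X\times_{S}T$ is connected whenever $T$ is, and the same identity applied to sheaves of relative homomorphisms gives full faithfulness.

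For the non-trivial direction of the second assertion, let $\pi:Y\to X$ be finite étale of degree $d$ with $Y_{s}\cong\coprod^{d}X_{s}$. I would form the Stein factorization of the proper morphism $g=f\circ\pi$, setting $T:=\underline{\Spec}_{S}(g_{*}\sO_{Y})$, and then show (i) that $h:T\to S$ is finite étale of degree $d$ and (ii) that the canonical map $Y\to T\times_{S}X$ is an isomorphism. For (i), $\pi_{*}\sO_{Y}$ is locally free of rank $d$ on $X$ and flat over $S$ (via the flatness of $f$ given by separability); applying cohomology and base change to the proper flat family $g:Y\to S$, one computes the fibre of $g_{*}\sO_{Y}=f_{*}\pi_{*}\sO_{Y}$ at $s$ as $H^{0}(Y_{s},\sO_{Y_{s}})=k(s)^{d}$, using that $H^{0}(X_{s},\sO_{X_{s}})=k(s)$ is a consequence of $f_{*}\sO_{X}=\sO_{S}$ under base change. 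Combined with semicontinuity and the bound $h^{0}(Y_{s'},\sO_{Y_{s'}})\leq d$ valid at every $s'\in S$, this forces $g_{*}\sO_{Y}$ to be locally free of constant rank $d$ with étale fibres, hence finite étale over $\sO_{S}$. For (ii), $Y\to T\times_{S}X$ is then a morphism of finite étale covers of $X$ of the same rank $d$ which is an isomorphism over $X_{s}$, and is therefore an isomorphism by rigidity of finite étale maps.

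The hardest point will be propagating the pointwise computation at $s$ to a global statement on all of $S$: \emph{a priori}, semicontinuity gives only that the locus $\{s'\in S:h^{0}(Y_{s'},\sO_{Y_{s'}})=d\}$ is closed, not open, so one must use properness of $g$ together with flatness and the identity $f_{*}\sO_{X}=\sO_{S}$ (which forces connected fibres and ensures base-change compatibility throughout) to upgrade this to local constancy. This is the step where the separability hypothesis on $f$ enters essentially, ensuring reduced fibres for $g$ and hence that Euler-characteristic considerations suffice to conclude global local freeness of $g_{*}\sO_{Y}$.
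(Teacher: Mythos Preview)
The paper does not supply a proof of this theorem; it is stated with attribution to \cite{SGA1} and immediately used as input for the base-change discussion that follows. Your outline is essentially the standard SGA1 argument via Stein factorization, and your categorical translation of exactness is correct.

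One comment on the technical heart you rightly flag as hardest. The cleanest route is not to juggle semicontinuity bounds on $h^{0}$ but to invoke directly the general fact (SGA1, Exp.~X, or EGA~III) that for a proper, flat, finitely presented morphism $g:Y\to S$ with geometrically reduced fibres, the sheaf $g_{*}\sO_{Y}$ is a finite \'etale $\sO_{S}$-algebra whose formation commutes with arbitrary base change. Separability of $f$ (hence of $g=f\circ\pi$) is exactly what guarantees the geometric-reducedness hypothesis. This single input gives at once that $T\to S$ is finite \'etale and that $T_{s}=\Spec H^{0}(Y_{s},\sO_{Y_{s}})$; the splitting assumption at $s$ then fixes the degree as $d$, constant over the connected base $S$. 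The isomorphism $Y\to T\times_{S}X$ follows because both $Y\to T$ (Stein) and $T\times_{S}X\to T$ (base change of $f_{*}\sO_{X}=\sO_{S}$, using the same commutation result applied to $f$) have geometrically connected fibres, so the induced map between them is a finite \'etale morphism of degree $1$. Your semicontinuity approach can be completed, but justifying the uniform bound $h^{0}(Y_{s'},\sO_{Y_{s'}})\le d$ at every $s'$ already requires knowing that $X_{s'}$ is geometrically connected, which in turn needs the same base-change compatibility of $f_{*}\sO_{X}=\sO_{S}$; so the Stein/base-change statement is not really being avoided.
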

  \noindent
 Let $\iota_K: k\hookrightarrow K$ be an embedding in an algebraically closed field, defining $\iota: k\hookrightarrow \bar k \hookrightarrow K$. This defines the functor ${\rm \acute{E}t}(X_{\bar k})\to  {\rm \acute{E}t}(X_{K}),  \ (Y\to X_{\bar k} ) \to (Y_K\to X_K)$. Let us denote by $x_K$ a $K$-point of $X$, and by $x_{\bar k}$ the induced $\bar k$-point by the map $X_K\to X_{\bar k}$. 
 \begin{prop} \label{prop:basechange}
 This functor induces a surjective homomorphism $\pi_1(X_K, \iota_K) \to  \pi_1(X_{\bar k}, \iota)$.
 If $X$ is proper, it is an isomorphism. 
 \end{prop}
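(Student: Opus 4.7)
The pullback functor ${\rm Fin\acute{E}t}(X_{\bar k}) \to {\rm Fin\acute{E}t}(X_K)$ induces a surjection on $\pi_1$ precisely when it sends connected covers to connected covers. So let $Y \to X_{\bar k}$ be a connected finite \'etale cover. As $X_{\bar k}$ is geometrically unibranch (our standing assumption from Section~\ref{s:pi}), so is $Y$, and combined with connectedness $H^0(Y, \sO_Y)$ is a finite field extension of $\bar k$, forcing $H^0(Y, \sO_Y) = \bar k$. Hence $Y$ is geometrically connected over $\bar k$, and $Y_K = Y \otimes_{\bar k} K$ stays connected.

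\textbf{Isomorphism for $X$ proper.} Write $K = \varinjlim_i A_i$ as a filtered colimit of finitely generated integral $\bar k$-subalgebras, and pick a $\bar k$-rational point $s_i \in \Spec A_i$ for each $i$ (Nullstellensatz). Apply Theorem~\ref{thm:HES2} to the proper morphism $X_{A_i} := X \times_k \Spec A_i \to \Spec A_i$; its hypothesis $f_*\sO = \sO$ holds by flat base change and $H^0(X, \sO_X) = k$, after reducing to the case $X$ is reduced, which is harmless for \'etale covers. With compatible base points, the exact sequence reads
\[
\pi_1(X_{\bar k}, x_{\bar k}) \xrightarrow{a_i} \pi_1(X_{A_i}, x_i) \xrightarrow{b_i} \pi_1(\Spec A_i, s_i) \to 1.
\]
The structural projection $X_{A_i} \to X_{\bar k}$ coming from $\bar k \hookrightarrow A_i$ induces $c_i : \pi_1(X_{A_i}) \to \pi_1(X_{\bar k})$, and since the composite $\Spec \bar k \to \Spec A_i \to \Spec \bar k$ via $s_i$ is the identity, $c_i a_i = \id$. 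An elementary group-theoretic argument from this retraction shows $b_i$ restricts to an isomorphism $\ker c_i \xrightarrow{\sim} \pi_1(\Spec A_i, s_i)$. Now take inverse limits: since $X$ is finite type over $k$, standard finite-presentation descent of \'etale covers yields $X_K = \varprojlim_i X_{A_i}$ and $\pi_1(X_K, x_K) = \varprojlim_i \pi_1(X_{A_i}, x_i)$, so the limit of the $c_i$ is precisely the pullback map $\pi_1(X_K) \to \pi_1(X_{\bar k})$. Its kernel identifies with
\[
\varprojlim_i \ker c_i \;\cong\; \varprojlim_i \pi_1(\Spec A_i, s_i) \;=\; \pi_1(\Spec K, \iota_K) \;=\; 1,
\]
the last equality by $K$ algebraically closed. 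Together with the surjectivity above, this gives the claimed isomorphism.

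\textbf{Main obstacles.} Two technical points require care: (i) verifying uniformly in $i$ the hypothesis $f_*\sO = \sO$ needed for Theorem~\ref{thm:HES2}, which requires reducing $X$ to the reduced case and appealing to flat base change over $\bar k$; and (ii) arranging the choices of base points $x_i$ and $\bar k$-points $s_i$ compatibly across the filtered system so the inverse limit manipulation is legitimate, via continuity of $\pi_1$ under cofiltered affine limits with a matching choice of paths. The remaining steps, namely the retraction identity and the kernel computation, are formal once this is in place.
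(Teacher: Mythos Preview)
Your proof is correct and aligns with the paper's approach: both arguments reduce surjectivity to the preservation of connectedness under base change from $\bar k$ to $K$, and both invoke Theorem~\ref{thm:HES2} as the key input for the proper case. The paper's treatment of the proper case is a one-sentence citation of Theorem~\ref{thm:HES2} via the factorization $\pi_1(X_K)\to\pi_1(X_{\bar k})\to\pi_1(X)$; your spreading-out over finitely generated $\bar k$-algebras $A_i$, together with the retraction $c_i$ and the limit computation, is precisely the standard way to unpack that citation, so the two are the same argument at different levels of detail.
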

 \noindent
 Surjectivity again amounts to showing that if  $Y$ is a connected scheme, locally of finite type over $\bar k$, then $Y_K$ is connected as well.  This is a local property, so one may assume that $Y={\rm Spec}(A)$ where $A$ is an affine $\bar k$-algebra, where $\bar k$ is algebraically closed in $A$. Then 
 $Y_K={\rm Spec}(K\otimes_{\bar k} A)$, and $K=K\otimes_{\bar k} \bar k$ is the algebraic closure of $K$ in $K\otimes_{\bar k} A$. 
 \\[.1cm]
 The homomorphism 
 $\pi_1(X_K, x_K)\to \pi_1(X,x)$ factors through $\pi_1(X_{\bar k}, x_{\bar k})$.  Thus 
 if $X$ is proper, 
   Theorem~\ref{thm:HES2} implies the second assertion of Proposition~\ref{prop:basechange}.
  \\[.2cm]
 Yet if $k$ has characteristic $p>0$,  {\it  injectivity it not true} in general. For example, 
 setting $X=\A^1$, and $ \bar k\hookrightarrow  \bar k[t]\hookrightarrow K$,  
 the Artin-Schreier cover  $x^p-x=st$ in $\mathbb{A}^2$  is not constant in $t$. (Example of Lang-Serre, \cite{SGA1}). 
If the homomorphism of Proposition~\ref{prop:basechange} was injective, the quotient 
$\Z/p$ of $\pi_1(X_K, \iota_K) $ defined by this example $\pi_K: Y_K\to \A^1_K$ would factor through $\pi_1(X_{\bar k}, \iota)$, 
so there would be an Artin-Schreier cover  $\pi: Y_{\bar k}\to \A^1_{\bar k}$ which pulls-back over $K$ to $\pi_K$, so 
$\pi_K$ would be constant.  Compare with Proposition~\ref{prop:basechangechar0} in characteristic $0$.
   \\[.1cm]
 In fact this is more general.
 \begin{lem}
 Let $C\subset \A^2$ be any smooth geometrically connected  curve, with $x\in C$, then the homomorphism $\pi_1(C,x)\to \pi_1(\A^2,x)$ is {\it never surjective}.  So there can't possibly be any Lefschetz type theorem for  $\pi_1(X,x)$ when $X$ is non-proper.
 \end{lem}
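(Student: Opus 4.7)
The statement is vacuous in characteristic zero, since $\A^2_{\bar k}$ is then simply connected, so I assume $k$ has characteristic $p > 0$. The plan is to produce one explicit $\Z/p$-quotient of $\pi_1(\A^2, x)$ which dies under $\pi_1(C, x) \to \pi_1(\A^2, x)$, directly generalizing the Lang--Serre cover $y^p - y = st$ on the line $\{s=0\}$ to an arbitrary smooth plane curve.

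First, since $C$ is smooth and geometrically connected, it is geometrically irreducible, and because $k[s,t]$ is a UFD I can write $C = V(g)$ for a single geometrically irreducible polynomial $g \in k[s,t]$. I would then consider the Artin-Schreier cover
\[ \pi_g : Y_g := \Spec k[s,t,y]/(y^p - y - g) \longrightarrow \A^2. \]
This map is finite \'etale, because $\partial_y(y^p - y - g) = -1$ is a unit, and carries a free $\Z/p$-action via $y \mapsto y + c$. Pulled back along the closed immersion $C \hookrightarrow \A^2$, the defining equation becomes $y^p - y = g|_C = 0$, which factors as $\prod_{c \in \F_p}(y - c)$; so the pullback is the trivial disjoint union of $p$ copies of $C$.

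The only substantive step is verifying that $\pi_g$ is itself connected, equivalently that $g \neq h^p - h$ for any $h \in \bar k[s,t]$; granted this, $\pi_g$ yields a surjection $\pi_1(\A^2, x) \twoheadrightarrow \Z/p$ which kills the image of $\pi_1(C, x)$ and thus obstructs surjectivity. But this is immediate from the irreducibility of $g$: if $g = h^p - h$ with $h$ non-constant, then $g = \prod_{c \in \F_p}(h - c)$ is a non-trivial factorization over $\bar k$, contradicting the geometric irreducibility of $g$; and if $h$ is constant, $h^p - h$ is a constant while $\deg g \ge 1$. So there is no genuine obstacle here — the entire argument reduces to the elementary characteristic-$p$ identity $h^p - h = \prod_{c \in \F_p}(h-c)$, which ensures that every irreducible plane polynomial produces a non-trivial Artin-Schreier cover of $\A^2$ that becomes trivial on its zero locus.
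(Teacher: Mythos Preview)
Your proof is correct and constructs the same cover as the paper: both take the Artin--Schreier torsor $y^p - y = g$ over $\A^2$, with $g$ a defining equation of $C$, and note that it splits over $C$ since $g|_C = 0$. The paper presents this cover as the pullback along $g\colon \A^2 \to \A^1$ of a connected Artin--Schreier cover $T\to\A^1$ and verifies connectedness by restricting to a line $L$ transverse to $C$; you instead observe directly that an equation $g = h^p - h = \prod_{c \in \F_p}(h-c)$ would contradict the geometric irreducibility of $g$. Your connectedness check is more elementary and entirely self-contained --- no auxiliary line, no genericity --- while the paper's phrasing has the minor advantage of visibly placing the cover in the Lang--Serre family discussed just before the lemma.

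One terminological quibble: in characteristic zero the lemma is not \emph{vacuous} but \emph{false} (over $\bar k$ the target group is trivial, hence every map onto it is surjective). You clearly see this and rightly pass to characteristic $p>0$, which is also implicit in the paper's own proof.
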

 \begin{proof}
 By Theorem~\ref{thm:HES} we may assume $k=\bar k$.  Let $f\in k[s,t]$ be the defining equation of $C$, $L$ be a line cutting $C$ transversally. Thus the restriction $f|_L:  L\to \A^1$ has degree $d \ge 1$, the degree of $f$, and is ramified  in $d$ points. Thus if $T\to \A^1 $ is a connected Artin-Schreier cover,  then $T\times_{\A^1} L$ is connected, thus as fortiori $T\times_{\A^1} \A^2$ is connected as well, and $T\times_{\A^1} \A^2\to \A^2$ induces  a $\Z/p$-quotient of $\pi_1(\A^2, x)$. However, $(T\times_{\A^1} \A^2) \times_{\A^2} C = (T\times_{\A^1} 0 ) \times_k C\to \A^2\times_{\A^2} C=C$ splits completely.  Thus the composite homomorphism 
 $\pi_1(C, x)\to \pi_1(\A^2, x) \to \Z/p$ is $0$.

 \end{proof}
 \noindent
 We remark however: Theorem~\ref{thm:HES2} enables one to compare $\pi_1(X_{\bar k})$ in characteristic $0$ and $p>0$. Indeed, assume $X$ is separable proper of finite type, defined over an algebraically closed characteristic $p>0$ field $\bar k$, and there is a model $X_R/R$, i.e. flat (thus proper), over a strictly henselian  ring $R$
 with residue field $\bar k$ and field of fractions $K$ of characteristic $0$. Then $X_R/R$ is separable.  Theorem~\ref{thm:HES2} refines to saying 
 \begin{prop} 
 $\pi_1(X_{\bar k}, x) \to \pi_1(X_R, x)$
 is an isomorphism. 
 \end{prop}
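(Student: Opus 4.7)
\emph{Proof proposal.} The plan is to apply Theorem~\ref{thm:HES2} to $f: X_R \to S := \Spec R$ and then separately establish injectivity via formal deformation theory. The morphism $f$ is proper, flat, and (since $\Char K = 0$) has geometrically reduced fibres, hence is separable in the sense used in Theorem~\ref{thm:HES2}. Geometric connectedness and reducedness of the special fibre $X_{\bar k}$ forces $H^0(X_{\bar k}, \sO) = \bar k$, and cohomology-and-base-change for the flat proper morphism $f$ then yields $f_*\sO_{X_R} = \sO_S$. Theorem~\ref{thm:HES2} thus produces the exact sequence
$$\pi_1(X_{\bar k}, x) \longrightarrow \pi_1(X_R, x) \longrightarrow \pi_1(\Spec R, s) \longrightarrow 1.$$
The rightmost group is trivial, since strict henselianness of $R$ means every connected finite \'etale cover of $\Spec R$ splits: Hensel's lemma lifts the trivial coverings of the separably closed residue field $\bar k$ uniquely to $R$. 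This already yields surjectivity of $\pi_1(X_{\bar k}, x) \to \pi_1(X_R, x)$.

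For injectivity it suffices to show that every finite \'etale cover $Y \to X_{\bar k}$ is the special fibre of a finite \'etale cover of $X_R$. The standard three-step mechanism is: (i) lift $Y$ uniquely to a compatible family of finite \'etale covers $Y_n \to X_R \otimes_R R/M^{n+1}$ (where $M \subset R$ denotes the maximal ideal), using that \'etale morphisms deform uniquely across square-zero thickenings; (ii) assemble these into a finite \'etale cover $\widehat Y$ of the formal completion of $X_R$ along the special fibre, and algebraize $\widehat Y$ by Grothendieck's existence theorem applied to the proper morphism $X_R \otimes_R \hat R \to \Spec \hat R$, obtaining a finite \'etale cover $Y' \to X_{\hat R}$; (iii) descend $Y'$ along $R \hookrightarrow \hat R$, invoking the fact that for proper $X_R/R$ with $R$ henselian, base change induces an equivalence between the categories of finite \'etale covers of $X_R$ and of $X_{\hat R}$.

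The main obstacle is step (iii), the passage from the completion $\hat R$ back to the merely henselian $R$. Grothendieck's existence theorem algebraizes only over $\hat R$, and the descent to $R$ rests on an additional input, either a form of Artin approximation in the proper setting, or the direct fact that finite \'etale covers of a proper scheme over a henselian base are insensitive to completing the base. Once this input is granted, the composition of the three steps provides a functorial inverse, at the level of finite \'etale covers, to specialization along $X_{\bar k} \hookrightarrow X_R$; combined with the surjectivity obtained from Theorem~\ref{thm:HES2}, this upgrades to the desired isomorphism $\pi_1(X_{\bar k}, x) \xrightarrow{\sim} \pi_1(X_R, x)$.
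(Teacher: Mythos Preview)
Your argument is correct, but it is organized differently from the paper and is longer than necessary. The paper's proof is a one-liner: the topological invariance of the \'etale site (\cite[Thm.~18.1.2]{EGA4}) gives an equivalence ${\rm Fin\acute{E}t}(X_{\bar k})\simeq {\rm Fin\acute{E}t}(X_{R_n})$ for every $n$, and the formal function theorem (\cite[Thm.~5.1.4]{EGA3}) then algebraizes to ${\rm Fin\acute{E}t}(X_{\bar k})\simeq {\rm Fin\acute{E}t}(X_R)$. An equivalence of Galois categories gives the isomorphism of fundamental groups in both directions at once, so no separate surjectivity argument is needed. Your steps (i) and (ii) are precisely these two inputs; once you have them, your appeal to Theorem~\ref{thm:HES2} for surjectivity is redundant, since the equivalence of categories already yields it.

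Two further remarks. First, your parenthetical ``since $\Char K=0$'' is not the reason the fibres are geometrically reduced: the special fibre lives over $\bar k$ of characteristic $p$, and what you actually use is the standing hypothesis that $X_R/R$ is separable. Second, your step (iii)---the descent from $\hat R$ back to the merely strictly henselian $R$---is a genuine point that the paper's sketch passes over (the formal function theorem as cited is for complete bases). You are right that this requires an extra input, e.g.\ Artin approximation for proper schemes over a henselian base, or the equivalence ${\rm Fin\acute{E}t}(X_R)\simeq {\rm Fin\acute{E}t}(X_{\hat R})$; so on this score your write-up is in fact more careful than the paper's.
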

 \noindent
 This is a direct consequence of the equivalence of categories between ${\rm \acute{E} t}(X_{\bar k})$ and ${\rm \acute{E} t}(X_{R_n})$ 
 (\cite[Thm.~18.1.2]{EGA4}), and of the formal function theorem (\cite[Thm.~5.1.4]{EGA3}). 
Here $R_n=R/\langle \pi^n\rangle$, where $\pi$ is a uniformizer of $R$.\\[.2cm]

  \noindent
 This defines the {\it specialization homomorphism} $sp: \pi_1(X_{\bar K}, x_{\bar K})\to \pi_1(X_{\bar k}, x)$, if $x_{\bar K}$ specializes to $x$.
 \begin{thm}[Grothendieck's specialization theorem, \cite{SGA1}] \label{thm:sp}
 If $X_R/R$ is proper separable, then sp is surjective.
 
 \end{thm}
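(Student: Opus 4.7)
The plan is to apply the second homotopy exact sequence of Theorem~\ref{thm:HES2} to the structure morphism $f\colon X_R\to \Spec R$ itself, and then to combine the result with the preceding isomorphism $\pi_1(X_{\bar k},x)\xrightarrow{\sim}\pi_1(X_R,x)$. Thus the specialization map will be realized as the composition of a surjection coming from the homotopy sequence and an isomorphism coming from the formal/$R_n$ comparison.

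First I would verify that Theorem~\ref{thm:HES2} actually applies to $f$. Properness and separability of $f$ are hypotheses of the statement. For the condition $f_*\sO_{X_R}=\sO_R$, I would argue as follows: the special fibre $X_{\bar k}$ is proper, connected, and geometrically reduced over the algebraically closed field $\bar k$, hence $H^0(X_{\bar k},\sO_{X_{\bar k}})=\bar k$. Since $f$ is proper and flat, cohomology and base change together with Nakayama's lemma then force $f_*\sO_{X_R}$ to be locally free of rank one, i.e.\ equal to $\sO_R$.

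Next, I would pick a geometric point $x_{\bar K}$ of $X_R$ lying over a geometric generic point of $\Spec R$; by construction $x_{\bar K}$ specializes to a geometric point $x$ of $X_{\bar k}$. Applying Theorem~\ref{thm:HES2} to $f$ at the base point $f(x_{\bar K})$ then yields the exact sequence
\[
\pi_1(X_{\bar K},x_{\bar K}) \longrightarrow \pi_1(X_R,x_{\bar K}) \longrightarrow \pi_1(\Spec R,\bar K)\longrightarrow 1.
\]
Because $R$ is strictly henselian, every finite \'etale $R$-algebra splits as a finite product of copies of $R$, so $\pi_1(\Spec R,\bar K)=1$; hence $\pi_1(X_{\bar K},x_{\bar K}) \to \pi_1(X_R,x_{\bar K})$ is surjective. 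A change of base point furnishes a (non-canonical) isomorphism $\pi_1(X_R,x_{\bar K})\cong \pi_1(X_R,x)$, and the preceding Proposition supplies $\pi_1(X_{\bar k},x)\cong \pi_1(X_R,x)$. Composing gives surjectivity of $sp$.

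The only delicate step in this plan is the verification that $f_*\sO_{X_R}=\sO_R$: it is here that the separability hypothesis is genuinely used (to pass from connectedness of $X_{\bar k}$ to connectedness of the generic fibre via the reduced structure of $H^0$ of fibres), combined with the cohomology-and-base-change machinery. Once this input is in hand, everything else is a formal combination of already-established results, namely Theorem~\ref{thm:HES2}, the triviality of $\pi_1$ of a strictly henselian local scheme, and the formal-function comparison recorded in the preceding Proposition.
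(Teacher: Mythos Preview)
Your approach is correct and is precisely the argument the paper has set up in the paragraphs preceding the theorem: the specialization map is defined there as the composite of $\pi_1(X_{\bar K},x_{\bar K})\to\pi_1(X_R,x)$ with the inverse of the isomorphism $\pi_1(X_{\bar k},x)\xrightarrow{\sim}\pi_1(X_R,x)$ from the Proposition, and surjectivity of the first map is exactly Theorem~\ref{thm:HES2} applied over the strictly henselian base $\Spec R$ (whose $\pi_1$ is trivial). The paper does not spell out the proof beyond assembling these ingredients and citing \cite{SGA1}; your write-up makes explicit the one point the paper leaves implicit, namely the verification that $f_*\sO_{X_R}=\sO_R$ via geometric reducedness of the special fibre and cohomology-and-base-change.
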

 \noindent
 We see that a  surjective specialization can not exist for non-proper varieties, e.g. over for $\A^1_R$, where $R$ is the ring of Witt vectors of an algebraically closed characteristic $p>0$ field.  But in the proper case, the \'etale fundamental group can not be larger than the one in characteristic $0$. 
 
 \section{Remarks on conjugate varieties}
 \noindent
  Let  $X$ be a complex  variety. Then $X$ is defined over a subfield $k\subset \C$ of finite  type over $\Q$. Let $k\subset K $ be any algebraically closed field, and $\bar k$ be the algebraic closure of $k$ in $K$.  Then Proposition~\ref{prop:basechange} is now much better behaved:
  \begin{prop} \label{prop:basechangechar0}
  The homomorphism $\pi_1(X_K, \iota_K) \to  \pi_1(X_{\bar k}, \iota)$ is an isomorphism.

  \end{prop}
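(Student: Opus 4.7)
Proof plan. Since Proposition~\ref{prop:basechange} already supplies surjectivity, the task is to establish injectivity of $\pi_1(X_K, \iota_K) \to \pi_1(X_{\bar k}, \iota)$, or equivalently that every finite \'etale cover of $X_K$ is the pullback of some finite \'etale cover of $X_{\bar k}$. The strategy has three ingredients: spreading out to a finitely generated subalgebra of $K$, specialising to a $\bar k$-rational point, and a characteristic-zero rigidity identifying the two pullbacks thereby obtained.

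Given $Y \to X_K$ finite \'etale, the standard limit formalism for finitely presented morphisms (EGA IV.8.10) descends $Y$ to a finite \'etale cover $Y_A \to X_A := X_{\bar k} \times_{\bar k} \Spec A$ for some finitely generated $\bar k$-subalgebra $A \subset K$; using generic smoothness in characteristic zero, we may further shrink so that $S := \Spec A$ is smooth and geometrically integral over $\bar k$. Hilbert's Nullstellensatz, applied to the finitely generated algebra $A$ over the algebraically closed field $\bar k$, then produces a $\bar k$-rational point $s_0 \in S(\bar k)$, and $Y_0 := Y_A \times_{S, s_0} \Spec \bar k \to X_{\bar k}$ is a candidate descent of $Y$. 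The remaining goal is to prove $Y \cong (Y_0) \times_{\bar k} K$ as covers of $X_K$.

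The crux is the K\"unneth decomposition
\[
\pi_1(X_{\bar k} \times_{\bar k} S, (x, s_0)) \;\cong\; \pi_1(X_{\bar k}, x) \times \pi_1(S, s_0),
\]
valid for $S$ geometrically connected of finite type over an algebraically closed field of characteristic zero. Under it, $Y_A$ corresponds to a representation $\rho: \pi_1(X_{\bar k}) \times \pi_1(S) \to \mathfrak{S}_n$, and both $Y = Y_A \times_S \Spec K$ and $(Y_0)_K$ are recovered by precomposing $\rho$ with the map $\pi_1(X_K) \to \pi_1(X_{\bar k}) \times \pi_1(S)$ induced by a $K$-point of $S$. Because the second projection $X_K \to S$ factors through $\Spec K$ and $\pi_1(\Spec K) = 1$ (since $K$ is algebraically closed), the $\pi_1(S)$-component of this map is trivial in either case, so both pullbacks reduce to $\rho$ restricted to $\pi_1(X_{\bar k}) \times \{1\}$ and are therefore isomorphic.

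The principal obstacle is the K\"unneth formula itself: its failure in characteristic $p$ is exactly the phenomenon exhibited by the Lang--Serre Artin--Schreier example discussed after Proposition~\ref{prop:basechange}. In characteristic zero it can be established transcendentally by fixing an embedding $\bar k \hookrightarrow \C$, invoking the topological K\"unneth formula for $X(\C) \times S(\C)$, and taking profinite completions via Corollary~\ref{cor:profinite} and the Riemann existence theorem (Theorem~\ref{thm:RET}); the fact that profinite completion commutes with finite products of finitely generated groups (which the relevant topological fundamental groups are, by Corollary~\ref{cor:f_presented}) then yields the stated decomposition.
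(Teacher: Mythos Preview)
Your approach is essentially the paper's: spread out to a finite-type base $S$, embed the ground field into $\C$, invoke the topological K\"unneth formula, and compare the generic fibre with a closed fibre. The paper spreads out over $k$ rather than $\bar k$ and phrases the comparison directly in terms of the cover, while you work over $\bar k$ and use the \'etale K\"unneth together with permutation representations; your formulation is in fact cleaner.

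There is one point to watch. Your justification of the decomposition $\pi_1(X_{\bar k}\times_{\bar k} S)\cong\pi_1(X_{\bar k})\times\pi_1(S)$---embed $\bar k\hookrightarrow\C$, use topological K\"unneth, take profinite completions---literally yields only the K\"unneth over $\C$. Passing from $\C$ back to $\bar k$ would require $\pi_1(X_\C)\cong\pi_1(X_{\bar k})$, $\pi_1(S_\C)\cong\pi_1(S)$, and $\pi_1((X_{\bar k}\times S)_\C)\cong\pi_1(X_{\bar k}\times S)$, each of which is an instance of the proposition you are proving. The paper sidesteps this circularity by carrying out the comparison entirely over $\C$ and then descending the resulting isomorphism of covers: an isomorphism between finitely presented $S_\C$-schemes is already defined over some affine $k$-scheme $T$ of finite type mapping to $\Spec\C$, and specialising at a closed point of $T$ produces the isomorphism over $S$ itself, hence over $K$. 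You can repair your argument in the same way; alternatively, observe that since the spread-out cover $Y_A$ and its restrictions to the two slices $X_{\bar k}\times\{s_0\}$ and $\{x_0\}\times S$ are all defined over $\bar k$, the representation of $\pi_1(X_\C)\times\pi_1(S_\C)$ attached to $(Y_A)_\C$ kills both $\ker\big(\pi_1(X_\C)\to\pi_1(X_{\bar k})\big)$ and $\ker\big(\pi_1(S_\C)\to\pi_1(S)\big)$, hence factors through $\pi_1(X_{\bar k})\times\pi_1(S)$---which is all the K\"unneth you actually need for this particular cover.
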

  \begin{proof}
  Surjectivity comes from  Proposition~\ref{prop:basechange}.  On the other hand, any finite \'etale cover $\pi: Y\to X_K$ is defined over an affine algebra $R=k[S]$, say $\pi_R: Z \to X_R$, so that $\pi_R\otimes _R K =\pi$.  We choose  a complex embedding $k\hookrightarrow \C$, inducing the embedding
  $R\hookrightarrow \C[S]$.  Thus $(\pi_R)\otimes_k \C$ is a finite \'etale cover of $X_{\C} \times_{\C} S_{\C}$. As the topological fundamental group verifies the K\"unneth formula, one concludes that $Z\otimes_k {\C}$ is isomorphic over $S_{\C}$ to  $V\times_{\C} S_{\C}$, for some connected finite \'etale cover  $V\to X_{\C}$.  So $V$ is isomorphic over $\C$ to $Z_s\times_s \C$, and $Z\otimes_k \C$ is isomorphic over $S_{\C}$  to $(Z_s\times_k S)\otimes_k \C$  for any closed point $s \in S$.  Since this isomorphism 
  is defined over an affine $k$-scheme $T$ say, by specializing at a closed point of $T$ one obtains the splitting $Z=Z_s\times_k S$ over $S$.

  \end{proof}
  \noindent
However, Proposition~\ref{prop:basechangechar0} does not extend to the topological fundamental group. Indeed Serre (\cite{Ser64}) constructed an example of a $X$ together with a $\sigma \in {\rm Aut}(\C/K)$ such that the topological fundamental group $\pi_1^{\rm top}(X_{\sigma})$ of the conjugate variety $X_\sigma$ is not isomorphic to the  the topological fundamental group $\pi_1^{\rm top}(X)$ of $X$.  (We do not write the base points as they do not play any r\^ole).  \\[.1cm]

\section{Lefschetz theorems in the projective case and in the tame case}
\begin{thm}[Grothendieck's Lefschetz theorems, \cite{SGA2}] \label{thm:et_lef}
Let $X$ be a regular  geometrically connected projective variety defined over  a field $k$. Let $Y\to X$ be a regular hyperplane section. Let $x\in Y\subset X$ be a geometric point. Then the continuous homomorphism of groups $\pi_1(Y,x)\to \pi_1(X,x)$ 
is 
\begin{itemize}
\item[1)] surjective if $Y$ has dimension $1$;
\item[2)] an isomorphism if $Y$ has dimension $\ge 2$.

\end{itemize}
\end{thm}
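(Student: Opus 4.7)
The plan is as follows. First, applying Grothendieck's homotopy exact sequence (Theorem~\ref{thm:HES}) to both $X$ and $Y$, the inclusion $Y\hookrightarrow X$ induces a commutative ladder of exact sequences with identity map on $\pi_1(k,\iota)$; a five-lemma chase reduces both the surjectivity and bijectivity of $\pi_1(Y,x)\to\pi_1(X,x)$ to the corresponding statements for $k=\bar k$, which I henceforth assume. Under the Galois correspondence of Section~\ref{s:pi}, surjectivity of $\pi_1(Y,x)\to\pi_1(X,x)$ is equivalent to the pullback functor ${\rm Fin\acute{E}t}(X)\to{\rm Fin\acute{E}t}(Y)$ sending connected covers to connected covers, and being an isomorphism is equivalent to this functor being an equivalence of categories (compatibly with the fiber functor at $x$).

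For surjectivity (part~1), let $\pi:Z\to X$ be a connected finite \'etale cover; then $Z$ is regular projective of dimension $\dim X\ge 2$, and $Z_Y:=\pi^{-1}(Y)$ is the zero locus of a section of $\pi^*\sO_X(Y)$, which is ample since the pullback of an ample line bundle along a finite morphism is ample. Apply the short exact sequence $0\to\sO_Z(-n\pi^*Y)\to\sO_Z\to\sO_{nZ_Y}\to 0$: as $Z$ is regular hence Cohen-Macaulay, Serre duality combined with Serre vanishing for the ample $\pi^*\sO_X(Y)$ gives $H^1(Z,\sO_Z(-n\pi^*Y))=0$ for $n\gg 0$, whence $k=H^0(Z,\sO_Z)\surj H^0(nZ_Y,\sO_{nZ_Y})$. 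Thus $H^0(nZ_Y,\sO_{nZ_Y})=k$ admits no nontrivial idempotents, so $nZ_Y$ is connected, and since $Z_Y$ and $nZ_Y$ share the same underlying topological space, $Z_Y$ is connected.

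For the isomorphism (part~2), assuming $\dim Y\ge 2$, I construct an inverse to the pullback functor in three steps. Step~(a), formal lifting: by topological invariance of the \'etale site under nilpotent immersions (\cite[Exp.~I]{SGA1}, \cite[Thm.~18.1.2]{EGA4}), any finite \'etale cover $Z_Y\to Y$ lifts uniquely and compatibly to each infinitesimal thickening $Y_n=V(\sI_Y^{n+1})$, yielding a formal finite \'etale cover $\hat Z\to\hat X_{/Y}$. Step~(b), algebraization: Grothendieck's formal existence theorem together with the Leff condition of \cite{SGA2} descends $\hat Z$ to a finite \'etale cover of some Zariski open $U\supset Y$ in $X$. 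Step~(c), extension to $X$: since $Y$ is ample, every divisor of $X$ meets $Y$, so $X\setminus U$ has codimension $\ge 2$; by Zariski-Nagata purity applied to the regular scheme $X$, the cover on $U$ extends uniquely to a finite \'etale cover of $X$. Verifying, via the uniqueness in each step, that the composite construction inverts the restriction functor completes the proof.

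The main obstacle is Step~(b): algebraization of formal \'etale covers is the core content of \cite{SGA2} and requires delicate depth and duality arguments relating cohomology on $\hat X_{/Y}$ to that on Zariski open neighborhoods of $Y$ in $X$. It is precisely here that the hypothesis $\dim Y\ge 2$ enters---the ambient dimension $\dim X\ge 3$ is needed to guarantee sufficient depth for the Leff property to hold---whereas the surjectivity part only requires $\dim Y\ge 1$.
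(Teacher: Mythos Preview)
The paper does not supply a proof of Theorem~\ref{thm:et_lef}; it is stated with attribution to \cite{SGA2} and then invoked as a black box for the corollaries that follow. Your proposal is a faithful outline of the classical SGA2 strategy---cohomological connectedness of ample divisors for surjectivity, and the three-step passage (topological invariance of the \'etale site, the Leff algebraization property, Zariski--Nagata purity) for bijectivity---so there is no alternative argument in the paper to compare against.

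One small wrinkle worth flagging: your opening reduction to $k=\bar k$ via Theorem~\ref{thm:HES} applied to $Y$ presupposes that $Y$ is geometrically connected, but, as the paper remarks immediately after the statement, this is a \emph{consequence} of the theorem rather than part of the hypotheses. The reduction is in any case unnecessary: both your Serre-vanishing argument for part~1) and the formal-geometric argument for part~2) are valid over an arbitrary base field. (If you do want the reduction, run your surjectivity argument first with $Z=X_{\bar k}$ to see that $Y_{\bar k}$ is connected, and only then invoke the homotopy sequence for $Y$.) Otherwise the sketch is sound, and you correctly locate the genuine difficulty in Step~(b).
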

\noindent
We see immediately a wealth of corollaries of this fundamental theorem.  Let us comment on a few of them.\\[.1cm]
The theorem implies in particular that $Y$ {\it  is geometrically connected}.  If $X$ and $Y$  are smooth over $k$,  then $Y_{\bar k}$ can't have several components as each of them would be ample, thus they would meet, and $Y_{\bar k}/\bar k$ could not be smooth.  But if we assume only regularity, this already is a subtle information.

\begin{cor}  \label{cor:ft_proj} If in addition, $X/k$ is smooth, 
 $\pi_1(X_{\bar k}, x)$ is topologically of finite type.

\end{cor}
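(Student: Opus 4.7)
The plan is to use the Lefschetz theorem (Theorem~\ref{thm:et_lef}) to reduce to the case of a smooth projective curve, and then transfer the problem to characteristic zero, where the statement is settled by the results of Sections~1 and 4. Since topological finite generation is inherited by continuous quotients of profinite groups, it is enough to exhibit a topologically finitely generated profinite group that surjects onto $\pi_1(X_{\bar k}, x)$.

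First I would reduce to a curve. Because $X/k$ is smooth projective and geometrically connected, $X_{\bar k}$ is smooth projective of some dimension $n \ge 1$ over the algebraically closed field $\bar k$. Bertini over $\bar k$ yields an iterated chain of smooth hyperplane sections
\[
X_{\bar k} \supset X_1 \supset \cdots \supset X_{n-1} =: C,
\]
terminating in a smooth projective curve $C$. Applying Theorem~\ref{thm:et_lef} at each stage (an isomorphism on $\pi_1$ as long as the section has dimension $\ge 2$, and a surjection at the last step) produces a continuous surjection $\pi_1(C, x) \twoheadrightarrow \pi_1(X_{\bar k}, x)$. Hence it suffices to prove that $\pi_1(C, x)$ is topologically of finite type.

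Next I would transfer to characteristic zero. If $\bar k$ has characteristic zero, pick a finitely generated subfield $k_0 \subset \bar k$ over which $C$ descends to $C_0$; then $\bar k_0$ is countable and embeds into $\C$, and Proposition~\ref{prop:basechangechar0} applied twice gives $\pi_1(C, x) \cong \pi_1((C_0)_{\bar k_0}, x) \cong \pi_1((C_0)_{\C}, x)$. Corollary~\ref{cor:profinite} identifies this with the profinite completion of $\pi_1^{\rm top}(C_0(\C), x)$, which is finitely presented by Corollary~\ref{cor:f_presented} (equivalently, the content of Corollary~\ref{cor:ft}). If instead $\bar k$ has characteristic $p > 0$, lift $C$ to a smooth projective curve $\sC/R$ over a complete DVR $R$ with residue field $\bar k$ and fraction field $K$ of characteristic zero — such a lifting is classical for smooth projective curves — and apply Grothendieck's specialization theorem (Theorem~\ref{thm:sp}) to obtain a surjection $\pi_1(\sC_{\bar K}, x_{\bar K}) \twoheadrightarrow \pi_1(C, x)$. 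The characteristic zero case applied to $\sC_{\bar K}$ then completes the argument.

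The main obstacle this route circumvents is that a general smooth projective variety in characteristic $p$ need not lift to characteristic zero, so Theorem~\ref{thm:sp} cannot be applied directly to $X$ itself; the reduction to curves via Lefschetz is precisely what makes lifting unconditionally available. The hypothesis that $X/k$ is smooth (rather than merely regular, as in Theorem~\ref{thm:et_lef}) enters here to ensure that $X_{\bar k}$ remains smooth, so that Bertini can be iterated over $\bar k$ to produce smooth hyperplane sections in every dimension.
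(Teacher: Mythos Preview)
Your proof is correct and follows essentially the same route as the paper's: reduce to a curve via Theorem~\ref{thm:et_lef}, lift the curve to characteristic~$0$, apply the specialization surjection (Theorem~\ref{thm:sp}), and finish over~$\C$ via Corollary~\ref{cor:profinite}. The only cosmetic difference is that the paper does not bother reducing to a curve in characteristic~$0$ (it invokes Proposition~\ref{prop:basechange} directly on~$X$, which is proper), whereas you cut down to a curve uniformly in both characteristics; this is harmless.
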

\begin{proof} If $k$ has characteristic $0$, we just apply  Corollary~\ref{cor:profinite} together with Proposition~\ref{prop:basechange}. If $k$ has characteristic $p>0$, 
applying Theorem~\ref{thm:et_lef} we may assume that $X$ has dimension $1$. Then $X$ lifts to characteristic $0$, so we apply Theorem~\ref{thm:sp}, and then Proposition~\ref{prop:basechangechar0} to reduce the problem to $k=\C$, then Corollary~\ref{cor:profinite} to reduce to the explicit topological computation.

\end{proof}
\noindent
One has the notion of {\it tame} fundamental group.  Recall that a finite  extension  $R\hookrightarrow S$ of discretely valued rings  is {\it tame} if the ramification index is not divisible by $p$, the residue characteristic, and if the residue field extension is separable (\cite{Ser62}). \\[.1cm]
One has two viewpoints to define tame coverings. If $X$ has a good compactification $X\hookrightarrow \bar X$ with a strict normal crossings divisor at infinity, then  a finite \'etale cover  $\pi: Y\to X$ is said to be tame if, for $\bar Y$ the normalization of $\bar  X$ in the field of rational functions on $Y$, and all codimension $1$ points $y$ on $\bar Y$, with image $x$ in $\bar X$, a codimension $1$ point on $\bar X$,  the extension $\sO_{\bar X,x}\hookrightarrow \sO_{\bar Y, y}$ is tame. \\[.1cm]
Another viewpoint is to say that a finite \'etale cover $\pi: Y\to X$ is tame if and only if it is after restriction to all smooth curves $C\to X$. \\[.1cm]
That those two definitions are equivalent is a theorem of Kerz-Schmidt \cite{KS10}. It enables one to define tame covers via the curve criterion without having a good compactification at disposal.  \\[.1cm]
This defines the {\it tame fundamental group} as a continuous quotient  
$\pi_1(X,x)\to \pi_1^t(X,x)$. 
By definition, the tame quotient factors 
$$\pi_1(X,x)\to \pi_1^t(X,x)\to\pi_1(k, \iota)$$  from \eqref{1}, which is surjective if $X$ is geometrically connected. 
\begin{thm}[Tame Lefschetz theorems, \cite{EKin15}] \label{thm:EK}
Let $X\hookrightarrow \bar X$ be a good regular projective compactification of a regular quasi-projective scheme $X$ defined over a field, that is $D=\bar X\setminus X$ is a  normal crossings divisor with regular components. Let $\bar Y$ be a regular hyperplane section which intersects $D$ transversally. Set $Y=\bar Y\setminus D \cap \bar Y$. 
Then the continuous homomorphism of groups $\pi^t_1(Y,x)\to \pi^t_1(X,x)$ 
is 
\begin{itemize}
\item[1)] surjective if $Y$ has dimension $1$;
\item[2)] an isomorphism if $Y$ has dimension $\ge 2$.

\end{itemize}
\end{thm}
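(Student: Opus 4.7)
I first reduce to the case $k=\bar k$. The tame quotient fits into a homotopy exact sequence
\[
1\to \pi_1^t(X_{\bar k},x)\to \pi_1^t(X,x)\to \pi_1(k,\iota)\to 1,
\]
as in Theorem~\ref{thm:HES}, compatible with restriction to $Y$, so it is enough to treat the geometric tame fundamental groups. Throughout I use that a finite \'etale cover $\pi\colon Z\to X$ is tame if and only if the normalisation $\bar Z\to \bar X$ of $\bar X$ in $k(Z)$ is tamely ramified along $D$, and that the transversality of $\bar Y$ with every stratum of $D$, combined with Abhyankar's lemma, ensures that $\bar Z\times_{\bar X}\bar Y$ is already normal.

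\textbf{Surjectivity in 1) and 2).} For surjectivity I take a connected tame \'etale cover $\pi\colon Z\to X$ with tamely ramified extension $\bar\pi\colon\bar Z\to\bar X$. Then $\bar Z$ is irreducible projective and $\bar\pi$ is finite surjective. Since $\dim\bar Z\ge 2$ (because $\dim Y\ge 1$ forces $\dim\bar X\ge 2$ and $\bar\pi$ is finite) and $\bar Y$ is an ample hyperplane section, Grothendieck's connectedness theorem for hyperplane sections gives that $\bar Z\times_{\bar X}\bar Y$ is connected. It is also normal by the reductions above, hence irreducible, and its open dense subscheme $Z\times_X Y$ is in particular connected. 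This proves surjectivity in both cases.

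\textbf{Injectivity in 2).} For injectivity I assume $\dim Y\ge 2$, so $\dim\bar X\ge 3$. Given a connected tame cover $W\to Y$, I first extend to the normal cover $\bar W\to\bar Y$ tamely ramified along $\bar Y\cap D$. By Abhyankar's lemma I produce integers $m_i$ prime to $\Char k$ such that the normalisation $\bar W'$ of $\bar W\times_{\bar Y}\bar Y'$ is \'etale over $\bar Y'$, where $\bar Y'\to\bar Y$ is the tame Galois Kummer cover ramified with index $m_i$ along $\bar Y\cap D_i$. The parallel Kummer cover $\bar X'\to\bar X$ ramified with index $m_i$ along $D_i$ is a normal projective variety satisfying, by transversality, $\bar Y'=\bar X'\times_{\bar X}\bar Y$, with $\bar Y'$ a regular ample Cartier divisor in $\bar X'$. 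Applying Grothendieck's classical Lefschetz (Theorem~\ref{thm:et_lef}, in its SGA2 formulation for the normal projective $\bar X'$ of dimension $\ge 3$) I extend $\bar W'$ to an \'etale cover $\bar Z'\to\bar X'$. I then descend along the Galois cover $\bar X'\to\bar X$: the descent datum of $\bar W'\to\bar W$ extends, by the fullness of the same equivalence, to descent data on $\bar Z'$, producing a cover $\bar Z\to\bar X$ tamely ramified along $D$ whose restriction over $X$ is the sought tame extension of $W\to Y$.

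\textbf{Main obstacle.} The delicate step will be the application of Lefschetz to the pair $(\bar X',\bar Y')$: while $\bar Y'$ is regular by transversality, $\bar X'$ typically acquires quotient singularities along the codimension-$\ge 2$ strata $D_i\cap D_j$ as soon as $m_i\ne m_j$ at such a point. This forces me to invoke a version of Grothendieck's Lefschetz for normal projective varieties with suitable depth hypotheses, or equivalently to pass to an equivariant resolution $\tilde X'\to\bar X'$ and use Zariski--Nagata purity to identify $\pi_1(\tilde X')=\pi_1(\bar X')$. Once this technicality is handled, the Galois descent in the last step is formal, since by construction tame covers of $X$ are exactly those trivialised by a Kummer cover of the pair $(\bar X,D)$.
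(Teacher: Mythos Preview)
The survey does not give a proof of Theorem~\ref{thm:EK}; it is quoted from \cite{EKin15} and then applied as a black box (Corollary~\ref{cor:ft_qproj}, Theorem~\ref{thm:E(V)}, Theorem~\ref{thm:E}). So there is no in-paper argument to compare your proposal against.

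That said, your outline is the standard strategy and is essentially the one underlying \cite{EKin15} and its precursor \cite{GM71}. The surjectivity step is fine: normality of $\bar Z\times_{\bar X}\bar Y$ via Abhyankar plus connectedness of an ample divisor on the normal projective $\bar Z$ (dimension $\ge 2$) is exactly what is needed. For injectivity you have correctly located the only genuine difficulty, namely that the global Kummer cover $\bar X'$ is singular along the deeper strata $D_i\cap D_j$ when the $m_i$ differ, so Theorem~\ref{thm:et_lef} as stated in this survey (with both $\bar X'$ and $\bar Y'$ regular) does not literally apply. Both of your proposed fixes work: the SGA2 Lefschetz theorem in its original form only needs $\bar X'$ to satisfy a depth condition along $\bar Y'$, and the singularities of $\bar X'$ are toric, hence Cohen--Macaulay, which suffices; alternatively an equivariant resolution plus Zariski--Nagata purity does the job. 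One small point to tighten in your descent step: $\bar X'\to\bar X$ is not \'etale, so what you are really doing is taking the $G$-quotient of the $G$-equivariant \'etale cover $\bar Z'\to\bar X'$, and you should observe that the quotient map $\bar Z'/G\to\bar X$ is \'etale over $X$ (where $G$ acts freely) and tame along $D$ by construction. The treatment in \cite{GM71} and \cite{EKin15} organises this instead through the tame fundamental group of the formal completion of $\bar X$ along $\bar Y$, which keeps everything log-regular and avoids ever producing a singular $\bar X'$; your route is more hands-on but demands the extra care you yourself flag.
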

\noindent
Again we see that this implies in particular that $Y$ and $X$ have the same field of constants. 
\begin{cor}  \label{cor:ft_qproj} If in addition $X/k$ is smooth,  then
 $\pi_1^t(X_{\bar k}, x)$ is topologically of finite type.

\end{cor}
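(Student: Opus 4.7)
The plan is to mirror the proof of Corollary~\ref{cor:ft_proj}, substituting the tame Lefschetz theorem for the classical one. First, via the homotopy exact sequence of Theorem~\ref{thm:HES} (applied to the tame quotient), $\pi_1^t(X_{\bar k},x)$ sits inside $\pi_1^t(X,x)$ as the kernel of the surjection to $\pi_1(k,\iota)$, so it is enough to treat the case $k=\bar k$.

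If $\Char(k)=0$, then every finite \'etale cover of $X$ is automatically tame along $D$, so $\pi_1^t(X_{\bar k},x)=\pi_1(X_{\bar k},x)$. Proposition~\ref{prop:basechangechar0} then reduces us to $k=\C$, and Corollary~\ref{cor:profinite} identifies the resulting group with the profinite completion of $\pi_1^{\rm top}(X(\C),x)$, which is finitely presented by the open-variety version of Corollary~\ref{cor:f_presented} recorded in the remark after that corollary. Topological finite generation is then immediate.

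If $\Char(k)=p>0$, I iterate Theorem~\ref{thm:EK}(2) along a chain of regular hyperplane sections in good position with respect to $D$ until reaching a surface; one further cut via Theorem~\ref{thm:EK}(1) produces a smooth quasi-projective curve $Y=\bar Y\setminus S$ over $\bar k$ whose tame fundamental group surjects onto $\pi_1^t(X_{\bar k},x)$. It therefore suffices to show that $\pi_1^t(Y,x)$ is topologically finitely generated when $\bar Y/\bar k$ is a smooth projective curve and $S\subset\bar Y$ is a finite reduced set of closed points. For this I lift the pair $(\bar Y,S)$ to a smooth proper curve $\bar Y_R/R$ equipped with a relative reduced divisor $S_R$, over a strictly henselian discrete valuation ring $R$ of mixed characteristic, with residue field $\bar k$ and fraction field $K$ of characteristic zero. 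The tame specialization morphism for $(\bar Y_R,S_R)$ yields a surjection
$$\pi_1^t(Y_{\bar K},x_{\bar K})\surj \pi_1^t(Y_{\bar k},x),$$
and the source is topologically finitely generated by the characteristic zero case already handled, hence so is the target.

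The step I expect to be the genuine obstacle is the tame specialization surjection used in the last paragraph. Theorem~\ref{thm:sp} is formulated for proper separable families, while the open tame version requires the equivalence between tame \'etale covers of $Y_{\bar k}$ and tame covers of $Y_R$ ramified only along $S_R$, together with the analogue of the formal function theorem in that tame setting; this is a separate theorem (going back to Grothendieck--Murre) and is where the argument does real work beyond the reductions inherited from Corollary~\ref{cor:ft_proj}. Everything else in the sketch is a direct transcription of the projective proof using the tame inputs of Theorem~\ref{thm:EK} and Proposition~\ref{prop:basechangechar0}.
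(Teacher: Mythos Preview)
Your proof is correct and follows essentially the same route as the paper's: reduce to a curve via the tame Lefschetz theorem~\ref{thm:EK}, lift the pair $(\bar Y,S)$ to mixed characteristic, and invoke the surjective tame specialization homomorphism to transport topological finite generation down from characteristic zero. The paper's proof is terser and cites the tame specialization result as due to Mme~Raynaud in \cite[XIII, Cor.~2.12]{SGA1} rather than Grothendieck--Murre, but the underlying input is the same; your identification of that step as the one place where genuine work happens beyond Corollary~\ref{cor:ft_proj} is exactly right.
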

\begin{proof}
Theorem~\ref{thm:EK} enables one to assume ${\rm dim}(X)=1$.  Then one argues as for Corollary~\ref{cor:ft_proj}, applying the surjective  specialization homomorphism of Mme Raynaud (see \cite[XIII, Cor.~2.12]{SGA1}) for the tame fundamental group for $X_R\subset \bar X_R$ a relative normal crossings divisor compactification of the curve $X_R$ over $R$. 
\end{proof}
\section{Deligne's  $\ell$-adic conjectures in Weil II}
\noindent
In Weil II,   \cite[Conj.1.2.10]{Del80},   Deligne  conjectured that if $X$ is a normal connected scheme of finite type over a finite field, and $V$ is an irreducible  lisse 
$\bar{\Q}_\ell$ sheaf of rank $r$ over $X$,  with finite determinant, then 
\begin{itemize}
 \item[(i)] $V$ has weight $0$, 
\item[(ii)] there is a number field $E(V)\subset \bar{ \Q}_\ell$
containing all the coefficients of the local characteristic  polynomials $f_V(x)(t)={\rm
  det}(1-tF_x|_{V_x})$, where $x$ runs through the closed points of $X$ and $F_x$ is the
geometric Frobenius at the point $x$, 
\item[(iii)] $V$ admits $\ell'$-companions for all prime
numbers $\ell'\neq p$.  
\end{itemize}
The last point means the following. Given a field isomorphism $\sigma: \bar \Q_\ell \to \bar \Q_{\ell'}$ for prime numbers $\ell, \ell'$ (possibly $\ell=\ell'$) different from $p$, there is a  $\bar \Q_{\ell'}$-lisse sheaf $V_{\sigma}$ such that $$\sigma f_V=f_{V_{\sigma}}.$$
(Then automatically, $V_{\sigma}$ is irreducible and has finite determinant as well.)\\[.1cm]
As an application of his Langlands correspondence for ${\rm GL}_r$, 
Lafforgue  proved  (i), (ii), (iii) for $X$ a smooth curve \cite{Laf02}. 
In order to deduce (i) on $X$ smooth of higher dimension, one needs a Lefschetz type theorem. 

\begin{thm}[Wiesend \cite{Wie06}, \cite{Wie08}, Deligne \cite{Del12}, Drinfeld \cite{Dri12}] \label{thm:curve}
Let $X$ be a smooth variety over $\F_q$ and $V$ be an irreducible $\bar \Q_\ell$-lisse  (or Weil) sheaf. Then there is a smooth curve $C \to  X$ such that $V|_C$ is irreducible. One can request $C$ to pass through a finite number of closed points $x\in X$ with the same residue field $k(x)$. 

\end{thm}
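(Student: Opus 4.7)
I would first reinterpret $V$ as an irreducible continuous representation $\rho\colon \pi_1(X,\bar x)\to \mathrm{GL}_r(\bar\Q_\ell)$, respectively of the Weil group $W(X,\bar x)$ in the Weil-sheaf case. Finding a smooth curve $f\colon C\to X$ with $f^*V$ irreducible is equivalent to producing $f$ so that the composition $\pi_1(C,\bar x)\to\pi_1(X,\bar x)\xrightarrow{\rho}\mathrm{GL}_r(\bar\Q_\ell)$ has image acting irreducibly on the standard module. I would induct on $n=\dim X$, the case $n=1$ being tautological (take $C=X$).

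If $X$ is smooth projective of dimension $\geq 2$, Grothendieck's Lefschetz theorem~\ref{thm:et_lef} does the essential work: for any smooth hyperplane section $Y\subset X$, the homomorphism $\pi_1(Y,\bar x)\to\pi_1(X,\bar x)$ is surjective, and surjectivity automatically preserves irreducibility of $\rho$. Requiring $Y$ to pass through the prescribed closed points $x_1,\dots,x_s$ (with common residue field $k(x)$) is a codimension-$s$ linear condition on $|\sO_X(1)|$, and Poonen's Bertini theorem over finite fields produces such a $Y$ smooth both at and away from the $x_i$ after twisting $\sO_X(1)$ sufficiently. Iterating yields a smooth curve through the $x_i$, closing the induction.

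The substantive case is $X$ smooth but non-projective. Here Lefschetz fails outright, as the lemma of Section~5 for $X=\A^n$ demonstrates, so blind hyperplane cutting cannot be used. The approach of Wiesend, Deligne, and Drinfeld is to parametrize smooth curves $f\colon C\to X$ through the $x_i$ by a Hilbert-scheme type moduli $\sM$---built from a smooth compactification $X\hookrightarrow\bar X$ with normal crossings boundary, considering general complete intersections through the $x_i$ transverse to $\bar X\setminus X$---and to show that the locus $\sM^{\mathrm{red}}\subset\sM$ of curves for which $f^*V$ becomes reducible is a proper constructible subset of $\sM$. Any $C$ in the open complement is the desired curve.

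The main obstacle is precisely to show $\sM^{\mathrm{red}}\neq\sM$. Let $H\subset \mathrm{GL}_r$ be the Zariski closure of $\rho(\pi_1(X))$; by irreducibility of $V$, $H$ acts irreducibly on the standard module. A curve $f\in\sM^{\mathrm{red}}$ has the Zariski closure of $\rho(\pi_1(C))$ contained in one of the finitely many maximal algebraic subgroups $H'\subsetneq H$ stabilising a nonzero proper subspace of $V$. For each such $H'$ I would combine Lafforgue's theorem on curves (the proven base case of the conjecture) with a Chebotarev-density argument at the prescribed $x_i$ and a Grothendieck-Lefschetz trace-formula count over $\sM$ to rule out a generic $f$ landing inside $H'$. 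The delicate point in characteristic $p>0$ is uniform control of wild ramification along $\bar X\setminus X$, guaranteeing that the finitely many constructible loci attached to different candidate $H'$ collectively miss a dense open subset of $\sM$; this is where the arguments of Wiesend, Deligne, and Drinfeld do their real work.
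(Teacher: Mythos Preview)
Your proposal misses the central simplification and, in the non-projective case, substitutes a vague and much heavier strategy for the short argument the paper actually gives.

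The paper does \emph{not} split into projective versus non-projective cases, nor does it invoke algebraic monodromy groups, Lafforgue, or trace-formula counts. The key idea you are missing is a Frattini-type reduction to a single \emph{finite} quotient. Write $\rho\colon \pi_1(X,\bar x)\to GL(r,R)$ with $R\supset \Z_\ell$ a finite extension and maximal ideal $\mathfrak m$. Let $H_1=\ker\big(\pi_1(X)\to GL(r,R/\mathfrak m)\big)$ and let $H_2\subset H_1$ be the intersection of the kernels of all homomorphisms $H_1\to \Z/\ell$. Then $H_2$ is open and normal in $\pi_1(X)$, and---because the congruence kernel $\ker(GL(r,R)\to GL(r,R/\mathfrak m))$ is pro-$\ell$---one has the group-theoretic fact that a continuous homomorphism $K\to \rho(\pi_1(X))$ from a profinite group $K$ is surjective if and only if the induced map $K\to \rho(\pi_1(X))/\rho(H_2)=\pi_1(X)/H_2$ is surjective. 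Thus to make $V|_C$ irreducible it suffices to find a smooth curve $C\to X$ (through the prescribed points) with $\pi_1(C,\bar x)\twoheadrightarrow \pi_1(X)/H_2$, i.e.\ such that the \emph{fixed} finite \'etale Galois cover $X_2\to X$ remains connected over $C$. That last step is pure Bertini \`a la Jouanolou (Deligne) or Hilbert irreducibility applied to a Noether normalisation (Drinfeld); no compactification, no wild-ramification bookkeeping, no moduli of curves is needed.

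Your non-projective outline---parametrising curves by a moduli space $\sM$, analysing the reducible locus via maximal subgroups of the algebraic monodromy group, and appealing to Lafforgue plus \v Cebotarev plus a trace-formula count---is not what Wiesend, Deligne, or Drinfeld do, and as written it is not a proof: you never explain how the count excludes every maximal $H'$, and invoking Lafforgue here is backwards, since the whole point of Theorem~\ref{thm:curve} is to \emph{reduce} the higher-dimensional conjectures to Lafforgue's curve case. The ``delicate point'' you flag about uniform control of wild ramification is a red herring once one has the Frattini reduction: only one finite cover $X_2\to X$ is in play, and connectedness of its pullback is a classical Bertini question.
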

\noindent 
There is a mistake in \cite{Laf02} on this point. 
The  Lefschetz theorem is in fact weaker than claimed in {\it loc.cit.}, the curve depends  on $V$, and is not good for all $V$ at the same time. 
\begin{proof} The sheaf
$V$ corresponds to a representation $\rho: \pi_1(X,\bar x)\to GL(r, R)$ where $R\supset \Z_\ell$ is a finite extension.  Let $\frak{m}\subset R$ be its maximal ideal.  One defines $H_1$ as the kernel of 
$\pi_1(X, \bar x)\to GL(r, R)\to GL(r, R/\frak{m})$. Let $X_1\to X$ be the Galois cover such that $H_1= \pi_1(X_1,\bar x)$. One defines  $H_2$ to be the intersection of the kernels of all homomorphisms $ H_1\to \Z/\ell$.  As $H_1(X_1, \Z/\ell)$  is finite, $H_2\hookrightarrow H_1$ is of finite index. Then $H_2$ is normal in $\pi_1(X, \bar x)$. This defines  the covers $X_2\to X_1\to X$, where $X_2\to X$ is Galois and $H_2= \pi_1(X_2, \bar x)$.   In addition,  any continuous homomorphism  $ K\to \rho(\pi_1(X, \bar x))$ from a profinite group $K$  is surjective 
if and only if its quotient
$K\to  \pi_1(X, \bar x)/\pi_1(X_2, \bar x)= \rho( \pi_1(X, \bar x))/\rho(\pi_1(X_2, \bar x) )$ is surjective.  \\[.1cm] 
Then one needs a curve $C$ passing through $x$ such that $\pi_1(C, \bar x)\to 
\pi_1(X, \bar x)/\pi_1(X_2, \bar x)$ is surjective. 
To this: one may apply Hilbert irreducibility (Drinfeld), see also \cite{EK12},  or Bertini \`a  la Jouanolou (Deligne). \\[.1cm] 
On the latter: one may assume $X$  affine (so $X_2$ affine as well). Then take an affine embedding  $X\hookrightarrow \A^N$, and define the Grassmannian of lines in $\A^N$. Bertini implies there is a non-empty open on $\bar \F_q$ on which the pull-back on $X_2\otimes  \bar \F_q$ of any closed point is connected and smooth. Making the open smaller, it is defined over $\F_q$. It yields the result.
 In the construction, one  may also fix first a finite number of closed points. The curves one obtains in  this way are defined over $\F_{q^n}$ for a certain $n$ as our open might perhaps have no $\F_q$-rational point. \\[.1cm]
On the former: one may assume $X$ affine and consider a Noether normalisation $\nu : X\to \A^d$ which is generically \'etale.  The points $x_i$ map to $y_i$  (in fact one may even assume $\nu$  to be  \'etale at those points). Then take a linear projection $\A^d\to \A^1$ and consider $X_{2, k(\A^1)}\to X_{k(\A^1)} \to \A^{d-1}_{k(\A^1)}$. Hilbert irreducibility implies there are closed $k(\A^1)$-points of $\A^{d-1}_{k(\A^1)}$,  thus with value in $k(\Gamma_i)$ for finite covers $\Gamma_i \to \A^1$, which does not split in $
X_{2, k(\A^1)}$, the image of which in $X_{k(\A^1)} $ specialises to $x_i$.

\end{proof}

\noindent
 Using Lafforgue's results, Deligne showed (ii) 
in 2007 \cite{Del12}. He first proves it on curves. Lafforgue  shows that finitely many Frobenii of closed points determine the number field containing all eigenvalues of closed points. Deligne makes the bound effective, depending on the ramification of the sheaf and the genus of the curve. Then given a closed point of high degree, he shows the existence of a  curve with small bound which passes through this point. \\[.1cm]
Using (ii) and ideas of Wiesend, Drinfeld  \cite{Dri12} showed
(iii) in 2011,  assuming in addition $X$ to be smooth.  

\noindent
To reduce the higher dimensional case  to curves, starting with $V$, Drinfeld shows that a system of eigenvalues for all closed points comes from an $\sigma(\sO_{E_{\ell}})$-adic sheaf, where $E_\ell$ is the finite extension of $\Q_\ell$ which contains the monodromy ring of $V$,  if and only if it does on curves and has tame ramification on the finite \'etale cover  on which $V$ has tame ramification.\\[.1cm]
Let $E(V)$ be Deligne's number field for $V$ irreducible with finite determinant, on $X$ {\it smooth} connected over $\F_q$.
\begin{thm}[Lefschetz for $E(V)$, \cite{EK12}]  \label{thm:E(V)} Let $X$ be smooth over $\F_q$.
\begin{enumerate}
\item[1)] For $\emptyset \neq  U \subset X$ open,  $E(V|_{U})=E(V)$.
\item[2)] Assume $X$ has a good compactification $X\hookrightarrow \bar X$. Let $C\hookrightarrow X$ be a smooth curve passing through $x$ such that 
$\pi_1^t(C, \bar x)\to  \pi_1^t(X,\bar x)$  is surjective
(Theorem~\ref{thm:EK}, 1)). Then for all tame $V$ irreducible with finite determinant, $E(V|_C)=E(V)$.

\end{enumerate}
\end{thm}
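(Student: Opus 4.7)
The plan is to deduce both parts from Drinfeld's companions theorem (point (iii) above, which is available here since (ii) is already granted). In both cases the inclusion $E(V|_U) \subset E(V)$, respectively $E(V|_C) \subset E(V)$, is immediate from the definition, since the closed points of $U$ (resp.\ $C$) sit inside the set of closed points of $X$ used to generate the larger field. So the content is the reverse inclusion, and the argument in both cases reduces to producing, for every $\sigma$ fixing the smaller field, an isomorphism $V \cong V_\sigma$ of lisse sheaves on $X$.

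For part 1), fix $\sigma \in {\rm Gal}(\bar\Q / E(V|_U))$ and extend it to an automorphism of $\bar\Q_\ell$. By Drinfeld's theorem (iii), applied to the irreducible $V$ with finite determinant, there is a $\sigma$-companion $V_\sigma$ on $X$, again irreducible with finite determinant, satisfying $f_{V_\sigma} = \sigma f_V$ at every closed point. By the choice of $\sigma$, the restrictions $V|_U$ and $V_\sigma|_U$ carry matching Frobenius characteristic polynomials on all closed points of $U$. The open immersion $U \hookrightarrow X$ induces a surjection $\pi_1(U, \bar x) \surj \pi_1(X, \bar x)$ (Zariski--Nagata, using that $X$ is normal), so the images in $GL(V)$ coincide and in particular $V|_U$ and $V_\sigma|_U$ remain irreducible. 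Chebotarev density for $\pi_1(U, \bar x)$ then forces $V|_U \cong V_\sigma|_U$ as $\pi_1(U)$-representations, and the same surjectivity upgrades this to $V \cong V_\sigma$ as $\pi_1(X)$-representations. Hence $f_V = f_{V_\sigma}$ at every closed point of $X$, so $\sigma$ fixes $E(V)$, proving $E(V) \subset E(V|_U)$.

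For part 2), the same template is executed in the tame world. Given $\sigma \in {\rm Gal}(\bar\Q / E(V|_C))$, produce the companion $V_\sigma$ on $X$ via (iii). One now needs the additional input that $\sigma$-companions of tame lisse sheaves are again tame, so that $V_\sigma$ also factors through $\pi_1^t(X, \bar x)$. The hypothesis that $C$ is chosen via Theorem~\ref{thm:EK}~1) so that $\pi_1^t(C, \bar x) \surj \pi_1^t(X, \bar x)$ then plays the role that Zariski--Nagata played before: Chebotarev applied to closed points of $C$ gives $V|_C \cong V_\sigma|_C$ as $\pi_1^t(C)$-representations, and the tame Lefschetz surjectivity lifts this to an isomorphism $V \cong V_\sigma$ on $X$, so $\sigma$ fixes $E(V)$.

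The crux of the approach is lining up the deep external inputs rather than any new construction: the argument relies on Drinfeld's companions theorem (which in turn rests on Lafforgue's theorem for curves together with the Lefschetz-type Theorem~\ref{thm:curve}) and, crucially for part 2), on the preservation of tameness under the companion construction. The remaining ingredients---Chebotarev density, irreducibility and isomorphism lifting across the surjections on $\pi_1$ and $\pi_1^t$, and the tame Lefschetz theorem~\ref{thm:EK} for $C$---form the formal glue. The main obstacle in writing this up cleanly is the tameness-preservation step in part 2), which is not a formal consequence of the other inputs and requires separate justification before the Chebotarev-plus-surjectivity template can be invoked inside $\pi_1^t$.
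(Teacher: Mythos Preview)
Your proof is correct and follows essentially the same route as the paper: the obvious inclusion, then for $\sigma$ fixing the smaller field form the companion $V_\sigma$, apply \v{C}ebotarev on $U$ (resp.\ $C$) to get $V_\sigma|_U\cong V|_U$ (resp.\ on $C$), and use the surjectivity of $\pi_1(U)\to\pi_1(X)$ (resp.\ $\pi_1^t(C)\to\pi_1^t(X)$) to conclude $V_\sigma\cong V$. You are in fact more explicit than the paper in isolating the tameness-preservation step for $V_\sigma$ in part~2), which the paper silently absorbs into ``by the Lefschetz theorem''; this step is indeed needed and is known (companions share ramification bounds), so your flagging it is appropriate rather than a gap.
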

\begin{proof}
Ad 1): One has an obvious injection $E(V|_{U})\hookrightarrow E(V)$. To show surjectivity, let $\sigma \in {\rm Aut} (E(V)'/E(V|_U))$, where $E(V)'/E(V|_U)$ is the Galois closure of $E(V)/E(V|_U)$ in $\bar \Q_\ell$.  Then $f(V_{\sigma}|_U)=f(V|_U)$ so by \v{C}ebotarev's density theorem, one has  $V_{\sigma}|_U=V|_U$. From the surjectivity 
of $\pi_1(U, \bar x)\to \pi_1(X,\bar x)$
 one obtains $V_\sigma=V$.\\[.1cm]
Ad 2):  Same proof as in 1): take $\sigma \in {\rm Aut}(E(V)/E(V|_C))$. Then 
$f(V_{\sigma}|_C)=f(V|_C)$, thus by the Lefschetz theorem $V_{\sigma}=V$. \\[.1cm]

\end{proof}
\noindent
Let $X$ be a  geometrically connected scheme of finite type over $\F_q$,  $\alpha: X'\to X$ be a finite \'etale cover. One says that a $\bar \Q_\ell$-lisse sheaf  $V$ has ramification bounded by $\alpha$ if $\alpha^* (V)$ is tame (\cite{Dri12}). 
This means that for any smooth curve $C$ mapping to $X'$, the pullback of $V$ to $C$ is tame in the usual sense.
If $X$ is geometrically unibranch, so is $X'$, and $V$ is defined by a representation $\rho: \pi_1(X)\to GL(r, R)$ of the fundamental group 
 where $R\supset \Z_\ell$ is a finite extension of discrete valuation rings.  Then $V$ has ramification bounded by  any $\alpha$ such that $\pi_1(X')\subset {\rm Ker} \big(\pi_1(X)\to GL(r, R)\to GL(r, R/2\ell)\big)$.
If $X$ is smooth, so is $X'$, and $\alpha^*(V)$ is tame amounts  to say that the induced representation of $\pi_1(X')$ factors through $\pi^t_1(X')$. Given a natural number $r$ and  given $\alpha$, one defines  $\sS(X, r, \alpha)$ to be  the set of isomorphism classes of irreducible  $\bar \Q_\ell$-lisse sheaves $V$ of rank $r$  on $X,$  such that $\alpha^*(V)$ is tame, modulo twist by a character of the Galois group of $\F_q$. \\[.1cm]
\noindent
Let $X$ be a smooth scheme of finite type over $\F_q$,  $X\hookrightarrow \bar X$ be a normal compactification, and $D$ be a Cartier divisor with support $\bar X\setminus X$. One says that a $\bar \Q_\ell$-lisse sheaf  $V$ has ramification bounded by $D$ if for any smooth curve $C$ mapping to $X$, 
with compactification $\bar C\to \bar X$, where $\bar C$ is smooth, 
the pullback $V_C$ of $V$ to $C$ has Swan conductor bounded above by $\bar C\times_{\bar X} D$ (\cite{EK12}). If $V$ has ramification bounded by $\alpha$, then also by $D$ 
for some Cartier divisor with support $\bar X\setminus X$ (\cite{EK12}).
 Given a natural number $r$ and  given $D$, one defines  $\sS(X, r, D)$ to be  the set of isomorphism classes of irreducible $\bar \Q_\ell$-sheaves of rank $r$ on $X$, of ramification bounded by $D$, modulo twist by a character of the Galois group of $\F_q$. \\[.1cm]
\begin{thm}[Deligne's finiteness, \cite{EK12}] \label{thm:finiteness}
On $X$ smooth of finite type over $\F_q$, with a fixed normal compactification $X\hookrightarrow \bar X$ and a fixed Cartier divisor $D$ with support $\bar X\setminus X$, the set $\sS(X, r, D)$ is finite.
\end{thm}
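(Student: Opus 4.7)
The plan is to reduce the statement to an analogous finiteness for smooth curves, where it follows from Lafforgue's Langlands correspondence for $GL_r$ over function fields. The reduction proceeds in three stages: the curve case, a Lefschetz detection of higher-dimensional sheaves by their restriction to a curve, and a uniform trivialisation of wild ramification.

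On a smooth connected curve $C/\F_q$ with a fixed effective divisor $D_C$ on a smooth compactification $\bar C$, the finiteness is extracted from Lafforgue's theorem: irreducible rank $r$ lisse sheaves with finite determinant and Swan conductor bounded by $D_C$ correspond, modulo twist by a character of $\Gal(\bar\F_q/\F_q)$, to cuspidal automorphic representations of $GL_r$ over the function field of $C$ with conductor bounded by $D_C$; since the space of cusp forms of fixed conductor and fixed central character is finite-dimensional, the set $\sS(C,r,D_C)$ is finite. In higher dimension, assuming first that every $V \in \sS(X,r,D)$ is tame along $D$, I would iterate the tame Lefschetz Theorem~\ref{thm:EK} to produce a smooth curve $C \hookrightarrow X$ whose closure in $\bar X$ meets $D$ transversally and whose tame fundamental group surjects onto $\pi_1^t(X,\bar x)$. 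Then restriction $V \mapsto V|_C$ defines a map $\sS(X,r,D) \to \sS(C,r,D_C)$ with $D_C = D\cap \bar C$, and \v{C}ebotarev's density theorem applied to closed points of $C$, combined with Theorem~\ref{thm:E(V)}(2), shows that $V|_C$ determines $V$ up to a Galois twist. Finiteness of the tame part of $\sS(X,r,D)$ follows from the curve case.

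To remove the tameness hypothesis, the plan is to construct a single finite \'etale cover $\alpha: X' \to X$, depending only on $(X, \bar X, D, r)$, after which every $V \in \sS(X,r,D)$ becomes tame along a suitable Cartier divisor $D'$ on some good compactification $\bar X'$ of $X'$. Applying the preceding tame argument on $X'$ yields finiteness of $\sS(X',r,D')$, and the pullback map $V \mapsto \alpha^* V$ has fibres controlled by the finite group $\Gal(X'/X)$ acting on irreducible summands, so finiteness upstairs implies finiteness on $X$. The principal obstacle is exactly this uniform taming: producing \emph{one} cover $\alpha$ that simultaneously trivialises the wild ramification of \emph{every} rank-$r$ sheaf with Swan conductor bounded by $D$. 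Locally at each branch of $D$, this rests on the finiteness of representations of the local higher ramification groups of given rank and given Swan conductor, while the global heart is patching these local covers into a genuinely \'etale $\alpha: X'\to X$ and controlling the induced bound $D'$ on $\bar X'$.
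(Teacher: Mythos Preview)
Your outline is essentially the proof of Theorem~\ref{thm:E} (the $\alpha$-bounded variant), not of Theorem~\ref{thm:finiteness} (the $D$-bounded one). The paper does not give a proof of Theorem~\ref{thm:finiteness}; it only records that Deligne's argument in \cite{EK12} is ``very complicated'' and ``relies on the existence of companions'', i.e.\ on Drinfeld's Theorem~(iii). Your proposal avoids companions entirely, which is precisely why it runs into trouble.

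The genuine gap is the step you yourself flag as the ``principal obstacle'': producing a \emph{single} finite \'etale cover $\alpha: X'\to X$, depending only on $(X,\bar X,D,r)$, such that every $V\in\sS(X,r,D)$ becomes tame on $X'$. The paper states only the easy implication: ramification bounded by some $\alpha$ implies ramification bounded by some $D$. The converse --- that a Swan bound by a fixed $D$ forces all such $V$ to be tamed by one common $\alpha$ --- is not established, and your sketch does not supply it. The local claim (``finiteness of representations of the local higher ramification groups of given rank and given Swan conductor'') is already nontrivial, since wild inertia is an infinite pro-$p$ group and the Swan bound alone does not obviously cut down to finitely many wild types; and even granting a local statement, globalising to an \emph{\'etale} (not merely generically \'etale) cover of $X$ is a separate difficulty. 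Worse, the most direct route to a uniform $\alpha$ --- take the compositum of taming covers of the finitely many $V$ --- presupposes the very finiteness you are trying to prove. This is why Deligne's actual proof goes through companions and the number field $E(V)$ rather than through a direct uniform-taming argument; your strategy, as written, is the simpler proof of Theorem~\ref{thm:E} and stops short of Theorem~\ref{thm:finiteness}.
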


\noindent
Deligne's proof is very complicated.  It relies on the existence of companions, thus $X$ has to be smooth.  However, one can prove  the following variant of Deligne's finiteness theorem in a very simple way, without using the existence of the companions and the existence of the number field. 
\begin{thm}[\cite{Esn16}] \label{thm:E}
On $X$  geometrically unibranch over $\F_q$, with a fixed finite \'etale cover $\alpha: X'\to X$, the set $\sS(X,  r, \alpha)$ is finite.

\end{thm}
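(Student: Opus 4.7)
\emph{Strategy.} The plan is to reduce in two stages to the case of a smooth curve over a finite field, and then exploit topological finite generation of the tame geometric fundamental group to conclude.

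\emph{Stage 1: reduce to $X'$.} The pullback functor $\alpha^*$ sends $\sS(X,r,\alpha)$ into the set of isomorphism classes of tame irreducible $\bar\Q_\ell$-lisse sheaves of rank $\le r$ on $X'$, modulo twist by characters of $\Gal(\bar\F_q/\F_q)$. The induced map has fibres of cardinality at most $r\cdot\deg(\alpha)$: if $\alpha^*V\cong\alpha^*W$ up to a Galois twist, then by adjunction $W$ is an irreducible constituent of the (twisted) rank-$r\deg(\alpha)$ sheaf $\alpha_\ast\alpha^*V$. Hence it suffices to bound the target set. Since $\alpha$ is \'etale, $X'$ is again geometrically unibranch, and the problem becomes the finiteness, up to twist, of tame irreducible sheaves of rank $\le r$ on $X'$.

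\emph{Stage 2: reduce to a smooth curve.} After replacing $X'$ by a regular alteration which preserves the tame fundamental group (using de Jong's alterations together with the curve criterion of Kerz--Schmidt to control tameness), I may assume $X'$ is smooth and admits a good regular projective compactification $\bar X'\supset X'$ whose boundary $D$ is a strict normal crossing divisor with regular components. By Theorem~\ref{thm:EK}, a regular hyperplane section $\bar Y$ of $\bar X'$ transverse to $D$ produces $Y=\bar Y\setminus(\bar Y\cap D)$ with $\pi_1^t(Y,x)\xrightarrow{\sim}\pi_1^t(X',x)$ whenever $\dim Y\ge 2$, so restriction of tame representations to $Y$ is fully faithful on isomorphism classes. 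Iterating until $\dim Y=1$ reduces the problem to a smooth curve $C$ over some $\F_{q^s}$, with the equivalence now given by twist by characters of $\Gal(\bar\F_q/\F_{q^s})$.

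\emph{Stage 3: the curve case, the main obstacle.} By Corollary~\ref{cor:ft_qproj}, $\pi_1^t(C_{\bar\F_q})$ is topologically finitely generated; fix generators $g_1,\dots,g_N$ and a Frobenius lift $F\in\pi_1^t(C)$. Any irreducible $\rho:\pi_1^t(C)\to GL_r(\bar\Q_\ell)$ takes values in $GL_r(\sO_K)$ for some finite extension $K/\Q_\ell$ with uniformizer $\varpi$. After twisting by a character of $\hat\Z$, I may normalize $\det\rho(F)$, so the tuple $(\rho(g_1),\dots,\rho(g_N),\rho(F))$ lies in a compact subset of $GL_r(\bar\Q_\ell)^{N+1}$. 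The real content of the proof is to show only finitely many conjugacy classes of such tuples arise. The essential inputs are: (a) for each $n$, the continuous homomorphisms $\pi_1^t(C_{\bar\F_q})\to GL_r(\sO_K/\varpi^n)$ form a finite set, by topological finite generation; (b) the Frobenius conjugation relation $\rho(Fg_iF^{-1})=\rho(F)\rho(g_i)\rho(F)^{-1}$, together with the finitely many possible conjugates $g_i^F$, rigidifies the family; (c) Schur's lemma for irreducible $\rho$ then rules out continuous deformations once enough mod-$\varpi^n$ data is fixed. Combining these gives the finiteness on $C$, and the two preceding reductions yield Theorem~\ref{thm:E}.
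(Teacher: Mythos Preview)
Your reductions in Stages~1 and~2 follow the same shape as the paper's argument: pull back along $\alpha$ to reduce to tame sheaves, pass to a de Jong alteration $Y\to X'$ with good compactification, then use the tame Lefschetz theorem (Theorem~\ref{thm:EK}) to restrict to a curve $C$ with $\pi_1^t(C)\surj\pi_1^t(Y)$, so that $\sS(Y,s,\mathrm{id})$ injects into $\sS(C,s,\mathrm{id})$. One correction: a de Jong alteration does \emph{not} preserve the tame fundamental group, so you cannot simply ``replace $X'$ by $Y$''; instead one argues, exactly as in your Stage~1, that pullback along $Y\to X'$ has finite fibres on isomorphism classes, using that $\pi_1(Y)\to\pi_1(X')$ has image of finite index because $X'$ is geometrically unibranch.

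The genuine gap is Stage~3. Topological finite generation of $\pi_1^t(C_{\bar\F_q})$ gives finiteness of mod-$\varpi^n$ representations for each $n$, but \emph{not} finiteness of $\bar\Q_\ell$-representations: the inverse limit is typically infinite. Concretely, the set of continuous irreducible $\rho\colon\pi_1^t(C_{\bar\F_q})\to GL_r(\bar\Q_\ell)$ up to conjugacy is an $\ell$-adic analytic space of positive dimension, and Schur's lemma does nothing to cut this down---it only says the automorphism group of each $\rho$ is the scalars, not that $\rho$ admits no deformations. The Frobenius-equivariance condition restricts you to the Frobenius-fixed locus of this space, but proving that locus is finite is exactly the hard content: it amounts to showing Frobenius has no eigenvalue $1$ on the tangent space $H^1(C_{\bar\F_q},\mathrm{ad}\,\rho)$, which is a weight statement, not a soft one. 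The paper does not attempt this; it invokes Lafforgue \cite{Laf02} for the finiteness of $\sS(C,s,\mathrm{id})$ on curves as a black box. Your inputs (a), (b), (c) are each correct in isolation but do not combine to yield the conclusion.
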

\noindent
The proof relies crucially on Theorem~\ref{thm:E(V)}. Indeed, on a good alteration $Y\to X'$, there is one curve $C\to Y$ such that $\pi_1^t(C)\to \pi_1^t(Y)$ is surjective. Then, for any natural number $s$, 
the set $\sS(Y, s, {\rm id})$ is recognized via restriction in $\sS(C, s, {\rm id})$, which is finite by \cite{Laf02}.
\begin{rmk}
If one were able to reprove Drinfeld's theorem in an easier way, one could this way reprove Deligne's 
theorem on the existence of the number field: indeed ${\rm Aut}(\bar \Q_\ell/\Q)$ then acts on the $\ell$-adic irreducible sheaves with bounded rank and ramification, and each object  $V$ has a finite orbit. So the stabilizer  $G$ of $V$ is of finite index in ${\rm Aut}(\bar \Q_\ell/\Q)$. 
This defines $E(V)=\bar \Q_\ell^G$,  a finite extension of $\Q$.
\end{rmk}
\noindent
The method used in the  proof of Theorem~\ref{thm:E} enables one to enhance the Lefschetz theorem for $E(V)$. 
\begin{thm}[Lefschetz for $E(V)$] Let $X$ be smooth of finite type over $\F_q$, 
let  $\alpha: X'\to X$ be a finite \'etale cover, let a natural number $r$ be given. 
  Then there is a smooth curve $ C \to X$, finite over its image, and a natural number $m>0$ such that $E(V|_C)$ contains $E(V\otimes \F_{q^m})$ for all $\bar \Q_\ell$-lisse sheaves $V$ with class in $\sS(X,r,\alpha)$.

\end{thm}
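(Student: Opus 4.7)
The strategy combines three ingredients already present: the finiteness of $\sS(X,r,\alpha)$ from Theorem~\ref{thm:E}, the Galois-cover trick from the proof of Theorem~\ref{thm:curve}, and the \v{C}ebotarev argument in the proof of Theorem~\ref{thm:E(V)}. The task reduces to producing a single curve $C$ and a single integer $m$ that work uniformly for every member of the finite set $\sS(X,r,\alpha)$.

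Using Theorem~\ref{thm:E}, fix representatives $V_1,\dots,V_N$ of $\sS(X,r,\alpha)$, with associated continuous representations $\rho_i : \pi_1(X,\bar x) \to GL(r,R_i)$ and maximal ideals $\mathfrak{m}_i \subset R_i$. For each $i$, as in the proof of Theorem~\ref{thm:curve}, let $H_2^{(i)} \subset \pi_1(X,\bar x)$ be the intersection of $\ker(\rho_i \bmod \mathfrak{m}_i)$ with all its index-$\ell$ subgroups; this is a normal subgroup of finite index defining a Galois cover $X_2^{(i)} \to X$, with the key property that a continuous homomorphism to $\rho_i(\pi_1(X,\bar x))$ is surjective iff its quotient to $\rho_i(\pi_1(X,\bar x))/\rho_i(H_2^{(i)})$ is. Let $\tilde X \to X$ be the Galois closure of $X_2^{(1)} \times_X \cdots \times_X X_2^{(N)}$. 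Applying the Bertini-\`a-la-Jouanolou argument (or Hilbert irreducibility) from the proof of Theorem~\ref{thm:curve}, choose a smooth curve $C \to X$, finite over its image, cut out in an affine embedding of $X$ by a sufficiently generic linear subspace passing through any desired closed points, so that $C \times_X \tilde X$ is connected. A priori $C$ is defined only over some $\F_{q^m}$; this is the $m$ of the statement. By the defining property of each $X_2^{(i)}$, the composite $\pi_1(C,\bar x) \to \rho_i(\pi_1(X,\bar x))$ is surjective for every $i$, and hence, as in the proof of Theorem~\ref{thm:E}, restriction to $C$ is injective on $\sS(X,r,\alpha)$ modulo twist by characters of $\Gal(\bar\F_q/\F_q)$ of order dividing $m$.

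Given $V \in \sS(X,r,\alpha)$ and $\sigma \in {\rm Aut}(\bar\Q_\ell / E(V|_C))$, one has $f(V_\sigma|_C) = f(V|_C)$, so \v{C}ebotarev gives $V_\sigma|_C \cong V|_C$; the injectivity above then forces $V_\sigma \cong V \otimes \chi$ on $X$ for some character $\chi$ of $\Gal(\bar\F_q/\F_q)$ with $\chi^m = 1$. Setting $\lambda = \chi(F)$, if $\alpha_{y,1},\dots,\alpha_{y,r}$ denote the Frobenius eigenvalues of $V$ at a closed point $y \in X$, this identity reads $\sigma(\alpha_{y,i}) = \lambda^{\deg y}\alpha_{y,i}$, hence $\sigma(\alpha_{y,i}^m) = \lambda^{m\deg y}\alpha_{y,i}^m = \alpha_{y,i}^m$. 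Since Frobenius at any closed point of $X \otimes \F_{q^m}$ above $y$ acts as $F_y^m$ with eigenvalues $\alpha_{y,i}^m$, the field $E(V\otimes\F_{q^m})$ is generated by these $m$-th powers and is therefore fixed by $\sigma$, yielding $E(V\otimes\F_{q^m}) \subset E(V|_C)$. The main obstacle is the simultaneity step, namely producing one $C$ that works for all $V_i$ at once; this is handled by forming the compositum $\tilde X$ before the Bertini/Hilbert construction, and the twist discrepancy coming from $C$ being defined over $\F_{q^m}$ rather than $\F_q$ is absorbed precisely by the base change to $\F_{q^m}$ in the statement.
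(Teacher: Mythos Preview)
Your overall architecture is sound and the \v{C}ebotarev/companion step at the end is the right idea, but there is a genuine gap in the ``injectivity'' step, and your route also diverges from what the paper suggests.

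\textbf{The gap.} From $C\times_X\tilde X$ connected you correctly get that $\pi_1(C,\bar x)\to\rho_i(\pi_1(X,\bar x))$ is surjective for each $i$ separately. This makes each $V_i|_C$ irreducible, but it does \emph{not} yield that restriction to $C$ is injective on $\{V_1,\dots,V_N\}$ (even up to twist). What injectivity requires is surjectivity of $\pi_1(C,\bar x)$ onto the image of the \emph{product} map $\rho_i\times\rho_j$ (indeed onto the image of $\rho_1\times\cdots\times\rho_N$), and the Frattini property does not pass to subdirect products: one can have $\rho_i(H_2^{(i)})\subset\Phi(\rho_i(\pi_1(X)))$ for each $i$ while $(\rho_1\times\cdots\times\rho_N)\big(\cap_i H_2^{(i)}\big)$ is \emph{not} contained in the Frattini subgroup of $(\rho_1\times\cdots\times\rho_N)(\pi_1(X))$. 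Your appeal ``as in the proof of Theorem~\ref{thm:E}'' does not apply: that proof uses a curve with $\pi_1^t(C)\twoheadrightarrow\pi_1^t(Y)$ surjective on the whole tame quotient, which is a much stronger statement than yours. The easy repair is to run the $(H_1,H_2)$ construction of Theorem~\ref{thm:curve} once, for the single representation $\rho_1\oplus\cdots\oplus\rho_N$ into $GL(Nr,R)$; then the Frattini argument gives $\pi_1(C)\twoheadrightarrow(\rho_1\times\cdots\times\rho_N)(\pi_1(X))$, and $V_i|_C\cong V_j|_C$ genuinely forces $V_i\cong V_j$ on $X$. A second minor point: the companion $V_\sigma$ is only a twist of some $V_{j_0}$, and twisting changes the Frattini cover; this is again absorbed by working with the direct sum.

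\textbf{Comparison with the paper.} The paper gives no proof, only the hint that ``the method used in the proof of Theorem~\ref{thm:E}'' applies. That method is: take a good alteration $Y\to X'$, use the tame Lefschetz theorem~\ref{thm:EK} to produce \emph{one} curve $C$ with $\pi_1^t(C)\twoheadrightarrow\pi_1^t(Y)$, and observe that this single curve detects \emph{all} tame sheaves on $Y$ at once. Since every $V\in\sS(X,r,\alpha)$ becomes tame on $Y$, the \v{C}ebotarev argument for $E(V)$ runs uniformly without ever invoking the finiteness of $\sS(X,r,\alpha)$. Your approach instead \emph{uses} Theorem~\ref{thm:E} as an input and then builds a curve tailored to the finitely many representatives; with the fix above this works, but it is logically heavier (it needs finiteness, hence Lafforgue) and the curve produced is good only for rank $r$ and bound $\alpha$, whereas the tame-Lefschetz curve is good for every rank and for $\alpha$ simultaneously.

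\textbf{A small imprecision.} In the last paragraph, a closed point of $X\otimes\F_{q^m}$ over $y$ has Frobenius $F_y^{\,m/\gcd(m,\deg y)}$, not $F_y^m$. Your conclusion survives because $\lambda^m=1$ forces $\lambda^{\mathrm{lcm}(m,\deg y)}=1$, so $\sigma$ still fixes the relevant symmetric functions; but the sentence as written is not correct, and the equality $\sigma(\alpha_{y,i})=\lambda^{\deg y}\alpha_{y,i}$ should be read as an equality of multisets, not termwise.
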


\section{Deligne's crystalline conjecture in Weil II} \label{s:crys}
\noindent
In  \cite[Conj.1.2.10]{Del80}, Deligne predicts crystalline companions, without giving a precise conjecture. It has been later  made precise by Crew \cite{Cre86}.
In the sequel, $X$ is a smooth geometrically connected variety of finite type  over $\F_q$.  We briefly recall the definition of the category of  $F$-overconvergent isocrystals. \\[.1cm]
Crystals are crystals in the crystalline site. They form a $W(\F_q)$-linear category. Its $\Q$-linearization is the category of  isocrystals.  It is a $K$-linear category, where $K$ is the field of fractions of $W(\F_q)$.
The Frobenius (here $x\mapsto x^q$) acts on the category.   An isocrystal $M$ is said to have an $m$-th Frobenius structure, or equivalently is an $F^m$-isocrystal, for some natural number $m\ge 1$,  if it is endowed with an isomorphism $F^{m*}M\cong M$ of isocrystals. 
An isocrystal with a Frobenius structure is  necessarily convergent. Convergent isocrystals are the isocrystals which are $F^\infty$-divisible. 
 Isocrystals with a Frobenius structure  are not necessarily overconvergent. Overconvergence   is an analytic property along the boundary of $X$, and concerns the  radius of convergence at infinity of $X$ of the underlying $p$-adic differential equation. Overconvergence 
  is defined  on isocrystals, whether or not they carry a Frobenius structure. 
  One defines the $\bar \Q_p$-linear category of $F$-{\it overconvergent isocrystals} as follows
 (see \cite[Section~1.1]{AE16}). One first considers the category of overconvergent isocrystals over $K$,  then $\bar \Q_p$-linearize it for a given algebraic closure $K\hookrightarrow \bar \Q_p$. In this $\bar \Q_p$-linear category, one defines the subcategory of 
   isocrystals  with an $F^m$-structure in this category, for some natural number $m\ge 1$. The morphisms respect all the structures. 
It is a $\bar \Q_p$-linear tannakian category.\\[.1cm]
 \noindent
 The analytic overconvergence condition is difficult to understand. However, 
 Kedlaya \cite{Ked07}  proved that an isocrystal with a Frobenius structure is overconvergent if and only if
  there is an alteration $ Y\to X$,  with $Y$ smooth and $Y\hookrightarrow \bar Y$ is a good compactification, such that the isocrystal $M$, pulled back to $Y$, has nilpotent residues at infinity. \\[.2cm]
The category of $F$-overconvergent isocrystals 
is believed to be the 'pendant' to the category of $\ell$-adic sheaves. 
More precisely, Deligne's conjecture can be interpreted as saying: \\[.1cm]
\begin{enumerate}
\item For $V$ an irreducible $\bar \Q_\ell$-sheaf with torsion determinant, and an abstract isomorphism of fields $\sigma: \bar \Q_\ell  \xrightarrow{\cong} \bar \Q_p$, there is an irreducible $F$-overconvergent isocrystal $M$ with torsion determinant, called $\sigma$-companion,  with the property: for any closed point $x$ of $X$, 
the characteristic polynomials  $f_V(x)\in \bar \Q_\ell[t]$ of   the geometric  $F_x$ acting on $V_x$ and the characteristic polynomial $f_M(x) \in \bar \Q_p[t]$ of the absolute Frobenius (still denoted by) $F_x$ acting on $M_x$ are the same via $\sigma$:  $\sigma f_V=f_M$. 
\item And vice-versa: for $M$ an
 irreducible $R$-overconvergent isocrystal  with torsion determinant, and an abstract isomorphism of fields $\sigma: \bar \Q_\ell  \xrightarrow{\cong} \bar \Q_p$, there is an irreducible $\bar \Q_\ell$-lisse sheaf $V$  with torsion determinant  with the property $\sigma f_V=f_M$. 
\end{enumerate}
\begin{thm}[Abe, crystalline companions on curves, \cite{Abe13}]
The whole strength of (1) and (2) is true on smooth curves. 
\end{thm}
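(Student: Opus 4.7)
The plan is to follow Lafforgue's strategy for the $\ell$-adic Langlands correspondence, but run the argument on the crystalline side. The central idea is that both the category of irreducible rank-$r$ $\bar\Q_\ell$-lisse sheaves with finite-order determinant on a smooth curve $X/\F_q$ and the category of irreducible rank-$r$ $F$-overconvergent isocrystals with finite-order determinant should be controlled by the same set: cuspidal automorphic representations of $GL_r$ over the function field $k(X)$ with prescribed local behavior. If one can match each side independently to these automorphic representations so that the local characteristic polynomials at closed points $x$ of $X$ correspond to the Satake parameters at $x$, then $\sigma$-compatibility on the automorphic side (which is purely algebraic) transports the companion relationship back and forth between $\bar\Q_\ell$ and $\bar\Q_p$.

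The first step would be to invoke Lafforgue's theorem \cite{Laf02}: every irreducible rank-$r$ $\bar\Q_\ell$-lisse sheaf with finite determinant on $X$ corresponds bijectively to a cuspidal automorphic representation $\pi$ of $GL_r$ over $k(X)$ in a way that matches Frobenius eigenvalues with Hecke eigenvalues. For the crystalline side, the key step is to prove the analogous Langlands correspondence with $F$-overconvergent isocrystals in place of $\bar\Q_\ell$-sheaves. To do this, one would build a six-functor formalism on the $p$-adic side, using Berthelot's arithmetic $\mathcal D$-modules (with overconvergent and Frobenius structures), develop a perverse $t$-structure, a middle extension, and a Grothendieck--Lefschetz trace formula on the stack of rank-$r$ shtukas with appropriate level structure, and then extract Hecke eigensheaves from the $p$-adic cohomology of these moduli exactly as on the $\ell$-adic side.

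Second, after the correspondence is set up on both sides, I would match local factors: the trace formula in both settings, applied to Hecke operators, shows that the Frobenius characteristic polynomial $f_V(x)$ of an $\ell$-adic sheaf at $x$ equals the Satake polynomial of the associated cuspidal $\pi$ at $x$, and symmetrically on the $p$-adic side one has $f_M(x)$ equal to the same Satake polynomial (under the fixed isomorphism $\sigma$). Composing Lafforgue's correspondence with the inverse of the $p$-adic one produces for each $V$ a crystalline companion $M$, and vice versa, settling both (1) and (2) of the conjecture on curves.

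The hard part is unmistakably the crystalline Langlands correspondence itself. The $\ell$-adic theory rests on four decades of machinery (perverse sheaves, purity, decomposition theorem, weights, cohomology of moduli of shtukas) that simply does not exist off-the-shelf in rigid/crystalline cohomology. In particular, one must: (a) prove stability of overconvergence and Frobenius structure under all six operations on arithmetic $\mathcal D$-modules over possibly singular or non-compact bases (building on Berthelot and on Caro's work); (b) establish the existence of a well-behaved perverse $t$-structure and a theory of holonomic overconvergent $F$-isocrystals admitting intermediate extensions; (c) prove weight estimates and a version of Deligne's purity in this setting, strong enough to identify cuspidal contributions inside the $p$-adic cohomology of shtuka stacks. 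Once these foundational tools are in place, the rest of Lafforgue's argument transports formally, but the foundational work is where essentially all the difficulty of \cite{Abe13} lies.
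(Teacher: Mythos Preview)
The paper does not give its own proof of this theorem; it is stated as a result of Abe and cited to \cite{Abe13}. Your outline is an accurate high-level summary of Abe's actual strategy there: establish a $p$-adic Langlands correspondence for $GL_r$ over function fields by developing the six-functor formalism, perverse $t$-structure, and trace formula for arithmetic $\mathcal D$-modules with Frobenius and overconvergent structure (building on Berthelot and Caro), run Lafforgue's argument on the cohomology of moduli of shtukas in this $p$-adic setting, and then compose with Lafforgue's $\ell$-adic correspondence through the common automorphic side to obtain companions in both directions. So your proposal matches the approach behind the cited reference, even though the survey itself offers no argument to compare against.
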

A first step towards a Lefschetz theorem for $F$-overconvergent isocrystals is the following weak form of \v{C}ebotarev-density theorem.
\begin{thm}[Abe,  \v{C}ebotarev for $F$-overconvergent isocrystals, \cite{Abe13}]
If two $F$-overconvergent isocrystals have their eigenvalues of the local Frobenii equal, then their semi-simplification are isomorphic. 

\end{thm}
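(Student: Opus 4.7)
My plan is to proceed via the tannakian formalism combined with a reduction to curves. Let $M_1, M_2$ be the two $F$-overconvergent isocrystals, and form the tannakian subcategory $\langle M_1 \oplus M_2\rangle$ of $F$-overconvergent isocrystals generated by $M_1\oplus M_2$; this category is equivalent to $\Rep_{\bar \Q_p}(G)$ for some affine algebraic group $G$ over $\bar \Q_p$, and $M_1, M_2$ correspond to representations $\rho_1, \rho_2$ of $G$. Each closed point $x\in |X|$ produces a Frobenius conjugacy class $\phi_x \in G(\bar \Q_p)$. The hypothesis says exactly that $\rho_1(\phi_x)$ and $\rho_2(\phi_x)$ have the same characteristic polynomial, hence the same trace, at every closed point $x$. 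By the classical Brauer--Nesbitt theorem for rational representations of affine algebraic groups, if $\Tr\rho_1$ and $\Tr\rho_2$ (which are regular functions on $G$) agree on a Zariski dense subset, they agree everywhere, and the semisimplifications of $\rho_1$ and $\rho_2$ coincide. The entire problem thus reduces to showing that the Frobenius classes $\{\phi_x\}_{x\in |X|}$ are Zariski dense in $G$.

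To prove this density I would reduce to curves. By a Lefschetz/Bertini-type argument in the spirit of Theorem~\ref{thm:curve}, select a smooth curve $C \hookrightarrow X$, finite over its image, such that the restriction functor $\langle M_1 \oplus M_2\rangle \to \langle M_1|_C \oplus M_2|_C\rangle$ induces a homomorphism with Zariski dense image at the level of tannakian groups, i.e.\ $G_C\to G$ is dominant. On $C$, Abe's crystalline companions theorem identifies the semisimple $F$-overconvergent isocrystals $M_i|_C$ with $\bar \Q_\ell$-lisse companions $V_i$ (for any prime $\ell\neq p$) having matching local characteristic polynomials. On the $\ell$-adic side, classical \v{C}ebotarev density in the profinite group $\pi_1(C,\bar x)$ ensures that Frobenius elements are topologically, hence Zariski, dense in the algebraic monodromy group attached to $V_1\oplus V_2$. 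Transporting this density back through Abe's equivalence yields Zariski density of the $\phi_x$ (with $x$ a closed point of $C$) inside $G_C$; composing with the dominant map $G_C \to G$ propagates this density to $G$, and Brauer--Nesbitt concludes $M_1^{ss}\cong M_2^{ss}$.

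The main obstacle is the curve-reduction step at the tannakian level. Unlike the $\ell$-adic setting, where a single profinite group $\pi_1(X,\bar x)$ governs all lisse sheaves, there is no global ``arithmetic fundamental group'' for $F$-overconvergent isocrystals, so the dominance of $G_C \to G$ must be established tannakianly: one needs to know that every subquotient of a tensor construction on $M_1\oplus M_2$ is detected by its restriction to $C$. This is a Lefschetz-type property for $F$-overconvergent isocrystals that is non-trivial in positive characteristic because of the analytic condition of overconvergence along the boundary of $X$, and it is precisely the crystalline analogue of Theorem~\ref{thm:curve}. Granting this curve-capture statement, the remainder of the proof is a routine combination of Abe's curve companions, classical \v{C}ebotarev density, and Brauer--Nesbitt.
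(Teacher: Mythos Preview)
The paper does not actually prove this theorem; it is stated and attributed to Abe \cite{Abe13} without proof. So there is no ``paper's own proof'' to compare against. I can, however, comment on the internal logic of your proposal relative to the paper's architecture.

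Your argument hinges on a tannakian Lefschetz statement (that $G_C\to G$ is dominant for a well-chosen curve $C$), which you explicitly grant in the last paragraph. In the logical structure of this survey that step is circular: the paper introduces Abe's \v{C}ebotarev theorem precisely as ``a first step towards a Lefschetz theorem for $F$-overconvergent isocrystals,'' and the subsequent tannakian Lefschetz results (the tame Lefschetz theorem and Theorem~\ref{thm:curve_crys} from \cite{AE16}) are built on top of it. So within this paper you cannot invoke those Lefschetz results to establish \v{C}ebotarev.

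There is also a gap in the ``transport back'' step. Abe's companion correspondence on curves matches characteristic polynomials of Frobenii, not tannakian monodromy groups; it is not an equivalence of tannakian categories. Knowing that Frobenii are Zariski dense in the $\ell$-adic algebraic monodromy group of $V_1\oplus V_2$ does not, without further argument, give Zariski density of the crystalline Frobenius classes in $G_C$. One would need to know that the $\ell$-adic and crystalline algebraic monodromy groups coincide (say, after passing to neutral components or reductive quotients), and that identification is itself a consequence of companion statements plus \v{C}ebotarev-type input, so again you are assuming what you want to prove. Abe's original argument in \cite{Abe13} proceeds instead through the $p$-adic $L$-function and the Langlands correspondence for isocrystals on curves, avoiding this circularity.
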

\noindent
One  has an analog of Theorem~\ref{thm:EK}.

\begin{thm}[Tame Lefschetz theorems for F-overconvergent isocrystals, \cite{AE16}] 
Let $X\hookrightarrow \bar X$ be a good regular projective compactification of a smooth quasi-projective scheme $X$,  geometrically irreducible  over a finite field,  such that $\bar X\setminus X$ is a normal crossings divisor with smooth components. Let $\bar C$ be a curve, smooth complete intersection of ample smooth ample divisors in good position with respect to 
$\bar X\setminus X$. 
Then  the restriction to $C$ of any $F$-overconvergent isocrystal irreducible $M$ is irreducible. 
\end{thm}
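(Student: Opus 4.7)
The plan is to reduce the irreducibility of the restriction $M|_C$ to a weak Lefschetz statement for rigid cohomology with $F$-overconvergent coefficients, applied to the endomorphism isocrystal.

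First, by Schur's lemma inside the $\bar \Q_p$-linear Tannakian category of $F$-overconvergent isocrystals, $M$ is irreducible if and only if $\text{End}(M) = \bar \Q_p$, and similarly for $M|_C$; here endomorphisms are taken in the category of $F$-overconvergent isocrystals, i.e.\ are Frobenius-equivariant. The restriction functor $i^{*}$ is monoidal, hence induces a canonical injection $\text{End}(M) \hookrightarrow \text{End}(M|_C)$. It therefore suffices to show this injection is surjective: every Frobenius-equivariant endomorphism of $M|_C$ extends to one of $M$.

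Second, I would reformulate the surjectivity cohomologically. Setting $\sN := M^{\vee}\otimes M$, an $F$-overconvergent isocrystal on $X$ of rank $r^{2}$, one has $\text{End}(M) = H^{0}_{\rm rig}(X,\sN)^{F=\id}$ and $\text{End}(M|_C) = H^{0}_{\rm rig}(C,\sN|_C)^{F=\id}$. Because Frobenius-invariants are exact on finite-dimensional $F$-modules over $\bar \Q_p$, the required equality of endomorphism algebras would follow from the isomorphism
$$H^{0}_{\rm rig}(X,\sN) \;\xrightarrow{\;\sim\;}\; H^{0}_{\rm rig}(C,\sN|_C).$$

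Third, I would establish this as a weak Lefschetz theorem for rigid cohomology with $F$-overconvergent coefficients. By Kedlaya's semistable reduction, after a suitable alteration $Y\to X$ the pullback of $\sN$ has nilpotent residues at infinity along a good compactification; the hypothesis that $\bar C$ is a smooth complete intersection of ample divisors in good position with respect to $\bar X\setminus X$ guarantees that the nilpotent-residue structure descends compatibly to the preimage of $C$. One may then invoke (or adapt, in parallel with Theorem~\ref{thm:EK}) the Lefschetz hyperplane theorem for rigid cohomology with overconvergent coefficients as developed in work of Kedlaya, Caro and Abe, which yields the degree-zero isomorphism above by successively intersecting with smooth hyperplane sections down to dimension one.

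The principal obstacle will be justifying the Lefschetz isomorphism for rigid cohomology with overconvergent coefficients in this setting, because the analytic overconvergence condition interacts delicately with the geometry near $\bar X\setminus X$: the usual Artin-style vanishing on the affine complement is not automatic for $p$-adic coefficients. The good-position hypothesis on $\bar C$ plays exactly the role it plays in the tame $\ell$-adic Lefschetz theorem of \cite{EKin15}: it ensures that the tameness (nilpotent residues) of $M$ restricts properly to $C$, so that the cohomological argument survives the inductive passage from $X$ to the complete intersection.
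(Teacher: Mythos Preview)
Your reduction to a comparison of global sections is in line with the paper's own description of the argument in \cite{AE16}: one argues ``purely cohomologically at the level of the global sections in the Tannakian category, and their behavior after restriction to a curve.'' However, two points separate your outline from the approach the paper sketches.

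First, a minor gap: Schur's lemma gives only one implication. From $\mathrm{End}(M|_C)=\bar\Q_p$ you can conclude that $M|_C$ is \emph{indecomposable}, not that it is irreducible, unless you already know $M|_C$ is semisimple. You do not address this, and it is not automatic for $F$-overconvergent isocrystals without further input (purity, for instance). The paper's approach sidesteps this by proving the stronger statement that restriction to $C$ preserves the entire Tannakian group $\langle M\rangle^{\otimes}$; once the Tannakian groups agree, an irreducible representation of the group stays irreducible, and no separate semisimplicity argument is needed.

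Second, and more substantially: the step you flag as ``the principal obstacle''---a weak Lefschetz isomorphism $H^0_{\rm rig}(X,\sN)\xrightarrow{\sim}H^0_{\rm rig}(C,\sN|_C)$ for arbitrary $F$-overconvergent coefficients---is exactly the heart of the matter, and your proposal to obtain it by invoking semistable reduction plus an Artin-vanishing style argument from ``work of Kedlaya, Caro and Abe'' is too vague to count as a proof. The paper singles out a different and specific ingredient: Abe's \emph{class field theory for rank one $F$-overconvergent isocrystals}. That is what makes the $H^0$-comparison go through in \cite{AE16}; it is not a byproduct of a general $p$-adic affine Lefschetz theorem of the type you gesture at. In other words, you have correctly identified the target ($H^0$ is preserved under restriction to $C$), but not the mechanism by which it is reached.
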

\noindent
One also  has the precise analog of the Lefschetz Theorem~\ref{thm:curve}.
\begin{thm}[Abe-Esnault \cite{AE16}] \label{thm:curve_crys}
Let $X$ be a smooth variety over $\F_q$ and $M$ be an irreducible $F$-overconvergent isocrystal. Then there is a smooth curve $C \to  X$ such that $V|_C$ is irreducible. One can request $C$ to pass through a finite number of closed points $x\in X$ with the same residue field $k(x)$. 

\end{thm}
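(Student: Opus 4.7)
The plan is to mimic the argument for the $\ell$-adic Theorem~\ref{thm:curve}, replacing the tools used there by their crystalline counterparts now available: Kedlaya's semistable reduction theorem, Abe's \v{C}ebotarev-type theorem for $F$-overconvergent isocrystals, Abe's crystalline companions on curves, and the tame Lefschetz theorem for $F$-overconvergent isocrystals stated just above. The strategy has three logical steps, parallel to the $\ell$-adic proof: first reduce to a tame situation, second produce a finite \'etale cover of $X$ which detects irreducibility after restriction, third produce the curve by a Bertini argument à la Jouanolou subject to the incidence conditions coming from the prescribed closed points.

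\textbf{Step 1 (tame reduction).} By Kedlaya's semistable reduction theorem, there is a generically \'etale alteration $f \colon Y \to X$ with $Y$ smooth, quasi-projective and admitting a good compactification $Y \hookrightarrow \bar Y$ with strict normal crossings boundary, such that $f^{*}M$ has nilpotent residues along $\bar Y \setminus Y$. Since irreducibility need not be preserved by pullback, one deals with $f^{*}M$ by passing to Galois closures and restricting attention to a single isotypic component; the output is that questions about $M|_{C}$ can be translated into questions about the pullback of $f^{*}M$ to a curve on $Y$, now within the reach of the tame Lefschetz theorem above.

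\textbf{Step 2 (finite \'etale cover detecting irreducibility).} In the $\ell$-adic proof, the crucial device is a finite \'etale cover $X_{2} \to X$ built from the representation $\rho$ of $\pi_{1}(X)$ by reducing modulo a maximal ideal and then intersecting all index-$\ell$ kernels; irreducibility of $V|_{C}$ is then equivalent to surjectivity of the induced map $\pi_{1}(C) \to \pi_{1}(X)/\pi_{1}(X_{2})$. The crystalline analogue must replace $\rho$ by the tannakian monodromy group $G$ of $M$, acting on the fiber at a base point; the finite quotient $G/G^{\circ}$ corresponds to a finite \'etale cover $X_{2} \to X$. Using Abe's \v{C}ebotarev theorem for $F$-overconvergent isocrystals, the semisimplification of $M|_{C}$ is determined by the Frobenius characteristic polynomials at closed points of $C$; combined with Abe's companion theorem on curves and the tame Lefschetz theorem, one shows that if $C \times_{X} X_{2}$ is connected then $M|_{C}$ remains irreducible.

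\textbf{Step 3 (Bertini with incidence conditions).} Once $X_{2} \to X$ is constructed, finding a smooth curve $C \to X$ passing through the prescribed closed points $x_{1},\ldots, x_{r}$ with common residue field $k(x_{i})$ such that $C \times_{X} X_{2}$ is connected is carried out exactly as in the proof of Theorem~\ref{thm:curve}. One may assume $X$ affine, embed $X \hookrightarrow \A^{N}$, work in the Grassmannian parameterizing lines (or complete intersections of a suitable degree in good position with respect to the boundary and the cover $X_{2} \to X$), and apply Bertini à la Jouanolou to get a non-empty open subset over $\bar\F_{q}$ on which the pullback to $X_{2}$ is connected and smooth, and which meets the incidence loci imposed by the $x_{i}$. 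Spreading back to $\F_{q^{m}}$ for some $m$ yields $C$.

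\textbf{Main obstacle.} The substantive difficulty lies in Step 2, since $F$-overconvergent isocrystals do not arise from a representation of a profinite fundamental group and so the reduction-mod-$\mathfrak{m}$ trick of the $\ell$-adic case is not directly available. One has to show, purely at the level of Frobenius traces at closed points, that connectedness of $C \times_{X} X_{2}$ forces $M|_{C}$ to remain irreducible; this is where Abe's \v{C}ebotarev theorem and the companions on curves are essential, and it is the step that genuinely requires the full package of crystalline results assembled in \cite{AE16}.
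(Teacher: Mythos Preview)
Your proposal diverges from the paper's approach in a substantive way, and Step~2 contains a genuine gap.

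\textbf{On the gap.} The $\ell$-adic construction of $X_2$ works because $\rho(\pi_1(X))$ is a compact $\ell$-adic Lie group, and for such groups the Frattini subgroup is open: any closed subgroup surjecting onto the finite Frattini quotient is the whole group. Your crystalline substitute, the component group $G/G^{\circ}$ of the Tannakian monodromy group, does not have this property. A closed algebraic subgroup $H\subset G$ can surject onto $G/G^{\circ}$ while being strictly smaller than $G$; for instance a maximal torus in a connected reductive $G$ already shows the difficulty when $G=G^{\circ}$. So connectedness of $C\times_X X_2$ does not force the Tannakian group of $M|_C$ to be all of $G$, and your appeal to \v{C}ebotarev plus companions on curves does not close this: companions on $C$ only let you pass between $M|_C$ and some $\ell$-adic sheaf \emph{on $C$}, with no link to an object on $X$ whose irreducibility you already know. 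The mechanism that replaces the Frattini argument is precisely what is missing.

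\textbf{How the paper proceeds instead.} The paper does not mimic the $\ell$-adic proof at all; it explicitly says the method \emph{yields a new proof} of Theorem~\ref{thm:curve}. The argument is Tannakian and cohomological: rather than testing irreducibility directly, one shows that restriction to a well-chosen curve preserves the entire Tannakian group $\langle M\rangle^{\otimes}$. The key ingredient you do not invoke is Abe's class field theory for rank~$1$ $F$-overconvergent isocrystals; with it, one controls the global sections $H^0$ in the Tannakian category (equivalently, the invariants of the monodromy group on tensor constructions) and shows they are unchanged upon restriction to a good curve. This bypasses the need for any Frattini-type statement and is what makes the argument go through uniformly in the $\ell$-adic and crystalline settings.
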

\noindent
It is beyond the scope of these notes to give the essential points of the proof of these two theorems. We observe that one strongly uses the Tannakian structure of the category. Rather than proving the theorems as stated, one shows that the restriction to a good curve preserves the Tannakian group spanned by one object.  
To this aim, one ingredient is a version of class field theory for rank one $F$-overconvergent isocrystals, due to Abe. This enables one to argue purely cohomologically at the level of the global sections in the Tannakian category, and their behavior after restriction to a curve. This also yields a new proof of Theorem~\ref{thm:curve}.
\\[.1cm]
Theorem~\ref{thm:curve_crys} has a number of consequences.  The first one is 
\begin{thm}[Abe-Esnault, \cite{AE16}] \label{thm:AEcomp}
(2) is true. 
\end{thm}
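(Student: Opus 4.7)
The plan is to reduce to smooth curves, where crystalline companions are available by Abe's theorem, and then propagate the resulting $\ell$-adic sheaf from the curve back to $X$ by combining Drinfeld's $\ell$-to-$\ell'$ companions on $X$ with Deligne's finiteness.

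First I fix a normal compactification $X\hookrightarrow \bar X$, a Cartier divisor $D$ bounding the ramification of $M$, and set $r$ equal to the rank of $M$. Using Theorem~\ref{thm:curve_crys}, I choose a smooth curve $j\colon C\to X$, finite over its image, such that $M|_C$ remains irreducible; by the freedom to prescribe finitely many closed points, together with the finiteness of $\sS(X,r,D)$ from Theorem~\ref{thm:finiteness}, I arrange $C$ so that restriction also preserves irreducibility for every $W\in\sS(X,r,D)$ simultaneously. Abe's theorem on curves then produces an irreducible $\bar\Q_\ell$-lisse sheaf $V_C$ on $C$ with torsion determinant, characterised up to a twist by $\sigma f_{V_C}=f_{M|_C}$.

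The second step is to exhibit $V_C$ as the restriction of some $V\in\sS(X,r,D)$ to $C$. The $\ell$-adic Lefschetz theorem (Theorem~\ref{thm:curve}) together with \v{C}ebotarev density shows that the restriction map $\sS(X,r,D)\to\sS(C,r,D|_C)$ is injective; likewise Abe's crystalline \v{C}ebotarev, combined with the crystalline Lefschetz, yields injectivity of the parallel map $\sS^{\mathrm{cris}}(X,r,D)\to\sS^{\mathrm{cris}}(C,r,D|_C)$. Abe's theorem on $C$ gives a bijection $\sS(C,r,D|_C)\leftrightarrow\sS^{\mathrm{cris}}(C,r,D|_C)$ implemented by the companion correspondence through $\sigma$. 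Inside the common finite set $\sS^{\mathrm{cris}}(C,r,D|_C)$ I therefore have two subsets: the image of $\sS(X,r,D)$ (restrict, then transport by Abe on $C$) and the image of $\sS^{\mathrm{cris}}(X,r,D)$ (restrict directly). The claim in (2) is equivalent to showing that the class of $M|_C$ lies in the first subset.

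The main obstacle is matching these two images. For this I establish the crystalline analogue of Deligne's finiteness on $X$ by the template of Theorem~\ref{thm:E}: restrict to a curve via Theorem~\ref{thm:curve_crys}, where Abe's bijection transfers the count to the $\ell$-adic side, whose finiteness on $C$ is known from Lafforgue. Drinfeld's $\ell$-to-$\ell'$ companion theorem on $X$ identifies $|\sS(X,r,D)|$ with the analogous cardinality on any $\ell'$-adic side for $\ell'\neq p$. A pigeonhole argument at the level of the families of local characteristic polynomials, using that on $C$ the two images consist of compatible tuples indexed by the same invariants modulo $\sigma$, forces the two subsets to coincide. Unwinding the bijection on $C$ then produces the sought sheaf $V$ on $X$, and the equality $\sigma f_V=f_M$ at an arbitrary closed point $x\in X$ is checked by passing through $x$ with a further curve via Theorem~\ref{thm:curve_crys} and applying \v{C}ebotarev on both sides. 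The torsion determinant of $V$ follows from that of $V_C$ together with the surjectivity of $\pi_1(C,\bar x)\to\pi_1(X,\bar x)$.
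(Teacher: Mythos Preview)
Your argument has a genuine gap at the ``pigeonhole'' step, and the overall strategy cannot be made to work as written.

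You produce two injective maps from finite sets $\sS(X,r,D)$ and $\sS^{\mathrm{cris}}(X,r,D)$ into the common target $\sS^{\mathrm{cris}}(C,r,D|_C)$, and then assert that ``a pigeonhole argument \ldots\ forces the two subsets to coincide.'' But injectivity and finiteness give no relation whatsoever between the two images: nothing you have established forces them even to have the same cardinality, let alone to be equal. The phrase ``compatible tuples indexed by the same invariants modulo $\sigma$'' does not name an actual constraint linking the two images. To make a counting argument work you would need to know in advance that $|\sS(X,r,D)|=|\sS^{\mathrm{cris}}(X,r,D)|$ \emph{and} that one image contains the other; the first is equivalent to the full companion correspondence on $X$ (both (1) and (2)), which is precisely what is at stake. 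There is also a circularity in invoking $\sS^{\mathrm{cris}}(X,r,D)$ at all: as the paper explains just after Theorem~\ref{thm:AEcomp}, the notion ``ramification bounded by $D$'' for an $F$-overconvergent isocrystal on $X$ is \emph{defined} via its $\ell$-adic companion, so it is not available to you before (2) is proved. Finally, the surjectivity $\pi_1(C,\bar x)\twoheadrightarrow\pi_1(X,\bar x)$ you invoke at the end is not a property of the curves produced by Theorem~\ref{thm:curve_crys}; that surjectivity holds for the tame quotient in the specific Lefschetz situation of Theorem~\ref{thm:EK}, not for an arbitrary curve on which $M$ stays irreducible.

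The route taken in \cite{AE16}, which the paper only alludes to, is different in kind: one does not compare two images inside a set attached to a single curve, but rather runs Drinfeld's reconstruction machinery. The isocrystal $M$ gives, via $\sigma^{-1}$, a system of local characteristic polynomials $\{\sigma^{-1}f_M(x)\}_{x\in |X|}$ with values in $\bar\Q_\ell[t]$. On \emph{every} smooth curve $C\to X$ this system is realised by an actual $\bar\Q_\ell$-lisse sheaf, namely the $\ell$-adic companion of $M|_C$ furnished by Abe's theorem. One then checks a uniform bound on the ramification of these curve-by-curve sheaves (this is the substantive crystalline input) and applies Drinfeld's criterion \cite{Dri12} to conclude that the system comes from a lisse sheaf $V$ on $X$. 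Theorem~\ref{thm:curve_crys} enters only at the end, to certify that $V$ is irreducible: choose $C$ with $M|_C$ irreducible, so $V|_C$ is its irreducible companion, hence $V$ itself is irreducible.
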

\noindent
The existence of $V$ as a Weil sheaf, without the irreducibility property, has been proved independently by Kedlaya \cite{Ked16}, introducing weights, which are not discussed in these notes. \\[.2cm]
Other ones consist in transposing on the crystalline side what one knows on the $\ell$-adic side, such as Deligne's Finiteness Theorem~\ref{thm:finiteness}:  in bounded rank and bounded ramification, there are, up to twist by a rank one $F$-isocrystal of $\F_q$, only finitely many isomorphism classes of irreducible $F$-overconvergent isocrystals. The notion of bounded ramification here is not intrinsic to the crystalline theory. One says that the $F$-overconvergent isocrystal has ramification bounded by an effective Cartier divisor $D$ supported at infinity of $X$ if a $\sigma$-companion has. This notion does not depend on the choice of the isomorphism $\sigma: \bar \Q_\ell \xrightarrow{\cong} \bar \Q_p$ chosen (\cite{AE16}).\\[.2cm]

\noindent
There is a {\it hierarchy} of $F$-overconvergent isocrystals. \\[.1cm]
Among the $F$-overconvergent isocrystals, there are those which, while restricted to any closed point of $X,$ are  unit-root $F$-isocrystals. This simply means that they  consist of a finite dimensional vector space over the field of fraction of the Witt vectors of the residue field of the point, together with a $\sigma$-linear isomorphism with slopes equal to $0$. (We do not discuss slopes here).
This defines the category of {\it unit-root} F-isocrystals, as a subcategory of the category of $F$-overconvergent isocrystals. \\[.1cm]
Crew \cite{Cre87} proves that they admit a lattice, that is a crystal with the same isocrystal class, which is stabilized by the Frobenius action, which implies that the lattice  is locally free. 
Such a lattice is defined by a representation
$\pi_1(X,\bar x)\to GL(r, R),$ where $R$ is a finite extension of $\Z_p$. In particular, all eigenvalues of  the Frobenius at closed points are $p$-adic units in $\bar \Q_p$. Drinfeld defines an unit-root $F$-overconvergent isocrystal to be {\it absolute unit-root} if the image of  those eigenvalues by any automorphism of $\bar \Q_p$  are still $p$-adic units.

\begin{thm}[Koshikawa, \cite{Kos15}] \label{thm:koshikawa}
Irreducible absolute unit-root  $F$-overconvergent  isocrystals with finite determinant are iso-constant, that is the restriction of the representation to $\pi_1(\bar X,  x)$ has finite monodromy.

\end{thm}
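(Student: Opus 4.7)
The plan is to combine the full strength of the companion theorems with Kronecker's theorem on algebraic integers: I would first show that every Frobenius eigenvalue of $M$ at every closed point is a root of unity, and then deduce finite geometric monodromy by a standard argument on curves over finite fields.

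The first reduction is to a curve. Using Theorem~\ref{thm:curve_crys}, pick a smooth curve $j\colon C\to X$ such that $j^*M$ remains irreducible; it is still absolute unit-root with finite determinant. Since the image of $\pi_1(\bar C,\bar x)\to \pi_1(\bar X,\bar x)$ has finite index (iterating the geometric Lefschetz surjectivity of Theorem~\ref{thm:EK}), it suffices to prove finiteness of the geometric monodromy on $C$.

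The heart of the argument is to prove that every Frobenius eigenvalue of $M$ is a root of unity. Fix a closed point $y\in C$ and a Frobenius eigenvalue $\alpha\in\bar\Q_p$ of $M$ at $y$. By Theorem~\ref{thm:AEcomp}\,(2), for every prime $\ell\ne p$ and every field isomorphism $\sigma\colon\bar\Q_p\xrightarrow{\cong}\bar\Q_\ell$, there is an $\ell$-adic companion $V_\sigma$ on $C$, irreducible with finite determinant, whose Frobenius at $y$ has $\sigma(\alpha)$ among its eigenvalues. By Lafforgue each $V_\sigma$ is pure of weight $0$ and takes values in $GL(r,R_\ell)$ for $R_\ell$ a finite extension of $\Z_\ell$; in particular its local eigenvalues are $\ell$-adic integers. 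Deligne's number field result applied to any one $V_\sigma$ then shows $\alpha$ is algebraic over $\Q$, lying in a number field $E$. For every embedding $\tau\colon E\hookrightarrow\C$, weight zero forces $|\tau(\alpha)|=1$; for every embedding $\tau\colon E\hookrightarrow\bar\Q_p$, the absolute unit-root hypothesis forces $|\tau(\alpha)|_p=1$; and for every prime $\ell\ne p$ and every embedding $\tau\colon E\hookrightarrow\bar\Q_\ell$, I choose $\sigma$ whose restriction to $E$ equals $\tau$, so that $\tau(\alpha)=\sigma(\alpha)$ is a Frobenius eigenvalue of $V_\sigma$ and hence $|\tau(\alpha)|_\ell\le 1$. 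Thus $\alpha$ is an algebraic integer all of whose archimedean conjugates have absolute value one, and by Kronecker's theorem $\alpha$ is a root of unity.

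To conclude, by Crew's theorem a unit-root $F$-overconvergent isocrystal on $C$ corresponds to a continuous $p$-adic representation $\rho\colon\pi_1(C,\bar x)\to GL(r,R)$ with $R$ a finite extension of $\Z_p$. All Frobenius eigenvalues of $\rho$ are roots of unity lying in a single number field, hence of bounded order $N$. Chebotarev density then implies $g^N=1$ for every $g$ in the image of $\rho$; a compact subgroup of $GL(r,\bar\Q_p)$ of bounded exponent is finite, so $\rho$, and in particular $\rho|_{\pi_1(\bar C,\bar x)}$, has finite image. The main obstacle will be the eigenvalue step: arranging the companions so that $\alpha$ becomes integral under \emph{every} embedding of its number field into \emph{every} non-archimedean completion, which is precisely where the absolute unit-root hypothesis (stronger than mere unit-root) and the full force of Theorem~\ref{thm:AEcomp} are needed. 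The other two steps are formal consequences of tools already established in the survey.
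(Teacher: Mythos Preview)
The survey does not itself prove Koshikawa's theorem; it only cites \cite{Kos15} and, in the remark immediately following, records that Tsuzuki's theorem (unit-root $F$-overconvergent isocrystals are potentially unramified) is an ingredient. So there is no detailed proof in the paper to compare against line by line. That said, your central idea---use the companions to see that every Frobenius eigenvalue is an algebraic integer with all archimedean absolute values equal to $1$, then invoke Kronecker---is exactly the heart of Koshikawa's argument, and your Step~2 is essentially correct.

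There is, however, a genuine gap in your Step~1. You claim that the image of $\pi_1(\bar C,\bar x)\to\pi_1(\bar X,\bar x)$ has finite index by ``iterating the geometric Lefschetz surjectivity of Theorem~\ref{thm:EK}''. But Theorem~\ref{thm:EK} is a statement about the \emph{tame} fundamental group $\pi_1^t$, not the full $\pi_1$; the paper emphasises earlier that no Lefschetz theorem can hold for the full $\pi_1$ of a non-proper variety in positive characteristic. Since Crew's representation $\rho$ attached to a unit-root $F$-overconvergent isocrystal is a priori only a $p$-adic representation of $\pi_1(X)$, you cannot pass to a curve this way. This is precisely where Tsuzuki's theorem enters in Koshikawa's proof: it tells you that $\rho$ is potentially unramified, so after a finite \'etale cover the representation extends across a good compactification and the \emph{projective} Lefschetz theorem~\ref{thm:et_lef} (or the tame one) becomes available. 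Alternatively, since you are willing to use Theorem~\ref{thm:AEcomp} on $X$ itself, you can simply drop the reduction to $C$ altogether: run the Kronecker argument at every closed point of $X$, and then carry out Step~3 directly on $X$.

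A smaller point in Step~3: from ``all Frobenius eigenvalues are $N$-th roots of unity'' you only get that $\rho(F_y)^N$ is \emph{unipotent}, not that it equals $1$. Chebotarev then gives that every $g$ in the image has $g^N$ unipotent. To finish, use irreducibility: the Zariski closure $G$ of the image has $g^N$ unipotent for all $g$, so $G^0$ contains no torus, hence $G^0$ is unipotent, hence acts trivially on the irreducible module, hence $G$ is finite. Your formulation ``a compact subgroup of bounded exponent is finite'' presupposes the step you have not yet justified.
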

\noindent
Isoconstancy means that after a finite \'etale cover, the isocrystal is constant, that is comes from an isocrystal on the ground field. 
\noindent

\begin{rmk}  One can summarize geometrically, as opposed to analytically,   the following different variants of isocrystals. 
\begin{enumerate}
\item 
Irreducible absolute $F$-overconvergent  unit-root isocrystals with finite determinant: those are the  iso-constant ones;
\item Irreducible  $F$-overconvergent  unit-root isocrystals: those are  the ones which are potentially unramified;
\item Irreducible  $F$-overconvergent isocrystals: those are the $F$-convergent isocrystals which become nilpotent after some alteration.

\end{enumerate}
The point (2), not discussed here, is used in the proof of (1) (Theorem~\ref{thm:koshikawa}) and is due to Tsusuki \cite{Tsu02}. 

\end{rmk}

\noindent
{\it Acknowledgements:}  It is a pleasure to thank Jakob Stix for a discussion on separable base points reflected in Section~\ref{s:pi}.  We thank Tomoyuki Abe and Atsushi Shiho for discussions. We thank the public of the Santal\'o lectures at the Universidad Complutense de  Madrid (October 2015) and the Rademacher lectures at the University of Pennsylvania (February 2016), where some points discussed in those notes were presented. 
In particular, we thank
Ching-Li Chai for an enlightening  discussion on compatible systems. We thank Moritz Kerz for discussions we had when we tried to understand Deligne's program in Weil II while writing \cite{EK12}. We thank the two referees for their friendly and thorough reports which helped us to  improve the initial version of these notes.


\begin{thebibliography}{DK09-2}

\bibitem[Abe13]{Abe13}  Abe, T.: {\it Langlands correspondence for isocrystals and existence of crystalline companions for curves}, {\tt arXiv:1310.0528v1}.

\bibitem[AE16]{AE16} Abe, T., Esnault, H.: {\it A Lefschetz theorem for overconvergent isocrystals with Frobenius structure}, {\tt http://www.mi.fu-berlin.de/users/esnault/preprints/helene/123\_abe\_esn.pdf},  preprint 2016.
 
\bibitem[BS15]{BS15} Bhatt, B., Schloze, P.: {\it The pro-\'etale topology for schemes}, Ast\'erisque {\bf 369} (2015),  99--201.


\bibitem[Bot59]{Bot59} Bott, R.: {\it On a theorem of Lefschetz}, 
 Michigan Math. J. {\bf 6} (1959)  211--216.
 
 \bibitem[dJ97]{dJ97}
 de Jong, J. :  {\it Smoothness, semi-stability and alterations},  Publ. Math.I H. \'E. S. {\bf 83} (1996), 51--93.



\bibitem[Cre86]{Cre86} Crew, R.: {\it Specialization of Crystalline Cohomology}, Duke Math. J. {\bf 53} no3 (1986), 749--757.

\bibitem[Cre87]{Cre87} Crew, R.: {\it $F$-isocrystals and $p$-adic representations},  Algebraic Geometry, Bowdoin (1985),  Proc. Symp. Pure Math. Soc.  {\bf 46} (1987), 111--138. 


\bibitem[Del80]{Del80} Deligne, P. :
{\it La conjecture de Weil II,} Publ. Math. I. H. \'E. S. {\bf 52}  (1980), 137--252. 


\bibitem[Del12]{Del12} Deligne, P. : {\it Finitude de l'extension de $\Q$ engendr\'ee par des traces de Frobenius, en caract\'eristique finie,} Mosc. Math. J. {\bf 12} (2012), no. 3, 497--514, 668. 


 
 \bibitem[Dri12]{Dri12} Drinfeld, V. : {\it On a conjecture of Deligne},  Mosc. Math. J. {\bf 12} (2012), no. 3, 515--542, 668. 

  
 
 \bibitem[EK12]{EK12} Esnault, H., Kerz, M.: {\it A finiteness theorem for Galois representations of function fields over finite fields (after Deligne)}, Acta Mathematica Vietnamica {\bf 37} 4 (2012), 531--562. 

 
 \bibitem[EKin15]{EKin15} Esnault, H., Kindler, L. : {\it Lefschetz theorems for tamely ramified coverings},  Proc. of the AMS {\bf 144} (2016), 5071--5080.
 
 \bibitem[Esn16]{Esn16} Esnault, H. : {\it A remark on Deligne's finiteness theorem}, Int. Math. Res. Not. (2016), in print,  {\tt 
 http://www.mi.fu-berlin.de/users/esnault/preprints/helene/121\_finiteness.pdf}
 
 \bibitem[For81]{For81} Forster, O.: {\it Lectures on Riemann surfaces}, Graduate Text in Mathematics {\bf 81}, Springer Verlag, 1981.
 
 \bibitem[GM71]{GM71} A. Grothendieck, J.P. Murre: {\it The tame fundamental group of a formal neighbourhood of a divisor with normal crossings on a scheme}, Lecture Notes in Mathematics {\bf 208} (1971), Springer Verlag.
 
 
 \bibitem[Hat02]{Hat02} Hatcher, A.: {\it Algebraic Topology},  Cambridge  University Press (2002).

\bibitem[Kat79]{Kat79} Katz, N.: {\it Slope filtration of $F$-crystals}, Ast\'erisque {\bf 63} (1979), 113--164.


\bibitem[Ked07]{Ked07} Kedlaya, K.:
{\it Semistable reduction for overconvergent F-isocrystals, I: Unipotence and logarithmic extensions}, Compositio Mathematica {\bf 143} (2007), 1164--1212. 

\bibitem[Ked16]{Ked16} Kedlaya, K.: {\it Notes on isocrystals}, {\tt http://kskedlaya.org/papers/isocrystals.pdf}, preprint 2016.

 \bibitem[KS10]{KS10} Kerz, M., Schmidt, A.: {\it On different notions of tameness in arithmetic geometry},  Math. Ann. {\bf 346} (2010), no. 3, 641--668.
 
 \bibitem[Kos15]{Kos15} Koshikawa, T.:  {\it Overconvergent unit-root $F$-isocrystals are isotrivial}, {\tt arXiv: 1511.02884v2}.
 
 \bibitem[Laf02]{Laf02}  Lafforgue, L. : {\it Chtoucas de Drinfeld et correspondance de Langlands}, Invent. math. {\bf 147} (2002), no 1, 1--241. 

 
  \bibitem[Lef24]{Lef24} Lefschetz, S.: {\it L'Analysis situs et la g\'eom\'etrie alg\'ebrique}, Gauthier-Villars, Paris, (1950), 154 pp. 

\bibitem[Mil96]{Mil96} Milne, J.S.: {\it Fields and Galois Theory}, {\tt http://www.jmilne.org/math/CourseNotes/ft.html}.

\bibitem[Poi95]{Poi95} Poincar\'e, H.: {\it Analysis situs}, Journal de l'\'Ecole Polytechnique (2) {\bf 1}, 1--123.

\bibitem[Rie51]{Rie51} Riemann, B.: {\it Grundlagen f\"ur eine allgemeine Theorie der Functionen einer 
ver\"anderlichen complexen Gr\"osse},  Inauguraldissertation, G\"ottingen, (1851).


\bibitem[Rie57]{Rie57} Riemann, B.: {\it Theorie der Abel'schen Functionen}, Journal f\"ur die reine und angewandte Mathematik {\bf 54} (1857), 101--155.



\bibitem[Sch13]{Sch13} Scholze, P.: {\it $p$-adic Hodge theory for rigid analytic varieties},  Forum of Mathematics, Pi {\bf 1} (2013).

\bibitem[Ser62]{Ser62} Serre, J.-P.: {\it Corps locaux}, Publ. de l'Univ. de Nancago, VIII, Hermann, Paris (1962), 243 pp.

\bibitem[Ser64]{Ser64} Serre, J.-P.: {\it Exemples de vari\'et\'es projectives conjugu\'ees non hom\'eomorphes}, Ann. Inst. Fourier {\bf 6} (1955), 20--50.

\bibitem[Shi10]{Shi10} Shiho, A.: {\it Cut-by-curves criterion for the overconvergence of $p$-adic differential equations}, Manuscripta math. {\bf 132} (2010), 517--537.


\bibitem[Sza09]{Sza09}   Szamuely, T.: {\it Galois groups and fundamental groups},  Cambridge Studies in  Advanced Mathematics {\bf 117} (2009), 270 pp.

\bibitem[Tsu02]{Tsu02} Tsuzuki, N.: {\it Morphism of $F$-isocrystals and the finite monodromy theorem for unit-root $F$-isocrystals}, Duke Math. J.  {\bf 111} (2002), 385--419.




\bibitem[Wie06]{Wie06} Wiesend. G.: {\it A construction of covers of arithmetic schemes}, J. Number Theory {\bf 121}  no 1  (2006), 181--131.

\bibitem[Wie08]{Wie08} Wiesend, G.: {\it Tamely ramified covers of varieties and arithmetic schemes}, Forum Math. {\bf 20}  no 3 (2008), 515--522.

\bibitem[EGA3]{EGA3} Grothendieck, A.: {\it \'El\'ements de G\'eom\'etrie Alg\'ebrique III}, \'Etudes cohomologiques des faisceaux coh\'erents, Publ. math. I. H. \'E. S. {\bf 11} (1961).
 
  
 \bibitem[EGA4]{EGA4} Grothendieck, A.: {\it \'El\'ements de G\'eom\'etrie Alg\'ebrique IV}, \'Etudes locales des sch\'emas et des morphismes de sch\'emas, Publ. math. I. H. \'E. S. {\bf 62} (1967).
 


\bibitem[SGA1]{SGA1}  Grothendieck, A.: {\it  Rev\^etements \'etales et groupe fondamental}, S\'eminaire de G\'eom\'etrie Alg\'ebrique du Bois-Marie,
 Lecture Notes in Mathematics {\bf 224} (1971).

\bibitem[SGA2]{SGA2} Grothendieck, A.: {\it Cohomologie locale des faisceaux coh\'erents et th\'eor\`emes de Lefschetz locaux et globaux}, S\'eminaire de G\'eom\'etrie Alg\'ebrique du Bois-Marie, Adv. St.in Pure Math. {\bf 2}  (1962), North-Holland Publ. Co. 

\bibitem[SGA3]{SGA3}  Grothendieck, A.: {\it Caract\'erisation et classification des groupes \`a type multiplicatif}, SGA3 Exp. X. 

\bibitem[StacksProject]{StacksProject} {\it Fundamental groups of schemes},  {\tt http://stacks.math.columbia.edu}.
\end{thebibliography}
\end{document}